\numberwithin{equation}{section}
\newtheorem{proposition}{Proposition}[section]
\newtheorem{lemma}[proposition]{Lemma}
\newtheorem{corollary}[proposition]{Corollary}
\newtheorem{theorem}[proposition]{Theorem}
\theoremstyle{definition}
\newtheorem{remark}[proposition]{Remark}
\newcommand{\bra}[1]{\left\langle #1\right|}
\newcommand{\ket}[1]{\left|#1\right\rangle}
\tikzset{bgplaq/.style={fill=lightgray!30!white}}
\tikzset{bggplaq/.style={fill=lightgray!70!white}}
\tikzset{bline/.style={line width=0.8mm}}
\tikzset{fline/.style={}}
\tikzset{gbline/.style={line width=0.8mm,draw=black!35!green}}
\tikzset{gline/.style={line width=0.3mm,draw=black!35!green}}
\tikzset{bbline/.style={line width=0.8mm,draw=black!35!blue}}
\tikzset{bline/.style={line width=0.3mm,draw=black!35!blue}}
\tikzset{wbline/.style={line width=0.6mm,draw=lightgray!10!white}}
\tikzset{wline/.style={draw=lightgray!10!white}}
\tikzset{part/.style={thick,solid,circle,fill=black,draw=black,inner sep=1.8pt,outer sep=3pt}}
\tikzset{hole/.style={thick,solid,circle,draw=black,fill=white,inner sep=1.8pt,outer sep=3pt}}
\tikzset{arrow/.style={postaction={decorate,thick,decoration={markings,mark = at position #1 with {\arrow{>}}}}},arrow/.default=0.5}
\tikzset{cross/.style={postaction={decorate,thick,decoration={markings,mark = at position #1 with{\draw (-2pt,-2pt) -- (2pt,2pt);\draw (2pt,-2pt) -- (-2pt,2pt);}}}}}
\tikzset{distort/.style={cm={1,0,-\slt,\sltb,(0,0)}}}
\def\slt{0.2}
\pgfmathsetmacro{\sltb}{sqrt(1-\slt*\slt)}
\newcommand\gbull[3]{\filldraw[fill=black!35!green, draw=black] (#1,#2) circle (#3cm);}
\def\part{\begin{tikzpicture}[scale=0.5,baseline=-1.7] \gbull{0}{0}{0.12}; \end{tikzpicture}}
\newcommand{\pf}{\mathop{\rm Pf}}
\renewcommand{\leq}{\leqslant}
\renewcommand{\geq}{\geqslant}
\definecolor{dred}{RGB}{176,0,0}
\begin{document}

\title[Refined Littlewood identity for spin Hall--Littlewood functions]
{Refined Littlewood identity for spin Hall--Littlewood symmetric rational functions}
	
	\author{Svetlana Gavrilova}
	
\keywords{}
	
	\maketitle
	
	\thispagestyle{empty}
	
\begin{abstract}
	Fully inhomogeneous spin Hall--Littlewood symmetric rational
	functions $F_\lambda$ are multiparameter deformations of the classical Hall--Littlewood symmetric polynomials and can be viewed as partition functions in $\mathfrak{sl}(2)$ higher spin six vertex models.
	
	We obtain a refined Littlewood identity
	expressing a 
	weighted sum of $F_\lambda$'s over all signatures $\lambda$ with even multiplicities
	as a certain Pfaffian. This Pfaffian can be derived as
	a partition function of the six vertex model in a triangle
	with suitably decorated domain wall boundary conditions.
	The proof
	is based on the Yang--Baxter equation.
\end{abstract}

\maketitle

\noindent

\section{Introduction}
\label{sec:intro}
\subsection{Background}

In the present paper we deal with summation 
identities for spin Hall--Little\-wood symmetric
rational functions. These functions
arise
as 
partition functions
of square lattice integrable
vertex models
related to the quantum group $U_q(\widehat{\mathfrak{sl}_2})$. This description originally appeared in \cite{Bor17}, \cite{BP18}.

The spin Hall--Littlewood functions
also can be identified with Bethe Ansatz eigenfunctions 
of the higher spin six vertex model on $\mathbb{Z}$, cf. \cite[Ch. VII]{KBI93}.
They also appear as eigenfunctions of certain stochastic particle systems
\cite{Pov13}, \cite{BCPS15}, \cite{CP16}. Following
\cite{Bor17}, \cite{BP18} and subsequent works, we treat spin Hall--Littlewood functions and their relatives from the point of
view of the theory of symmetric functions.
A classical reference on the theory of symmetric functions is the book
\cite{Mac95} where Schur, Hall--Littlewood, and Macdonald symmetric
polynomials
and symmetric functions are developed and various identities for them are formulated or proved.

One of the common features 
for most
families of symmetric polynomials
is a \emph{Littlewood type summation identity}. For example, the Schur symmetric polynomials $s_{\lambda}$ satisfy the following Littlewood identity:
\begin{equation*}
\sum_{\lambda'\ \text{even}} s_{\lambda}(u_1, \dots, u_m)
=
\prod_{1 \leq i<j\leq m}
(1 - u_i u_j)^{-1}.
\end{equation*}
Here the summation is over all signatures $\lambda=(\lambda_1\ge \ldots\ge \lambda_m\ge0 )$ such that all parts of its conjugate $\lambda'$ are even, or equivalently, all part-multiplicities of $\lambda$ are even.
For a comprehensive study of Littlewood identities for
Hall–Littlewood polynomials, we refer the reader to \cite[Ch. III]{Mac95}, \cite{War06} and to \cite{RW21} for recent developments concerning boxed Littlewood formulae for
Macdonald polynomials.

Moreover, Littlewood identities are important for integrable probability: they appear as a key tool for studying half-space integrable models related to the corresponding half-space Macdonald processes, see \cite{BBC20}, \cite{BBCW18}, \cite{BZ19}.

We study \emph{refinements} of Littlewood type identities,
which are derived by inserting an extra factor
into each term of the summation in the left-hand side.
The expression for the right-hand side, in turn, also gets more complicated: it becomes a Pfaffian. Earlier, a number of Pfaffian formulas for partition functions of the six vertex model were obtained by Kuperberg in \cite{Kup02}. We follow a method for proving refined (Cauchy and Littlewood type) identities introduced in \cite{WZJ16}, which is based on the Yang--Baxter equation.

One of the applications of refined Cauchy identities for Macdonald polynomials is a possibility to compute the expectations of observables for Macdonald
measures. Namely, they can be expressed as a certain determinantal formula independent of the parameter $q$.
This result goes back to 
\cite{KN99}, see also \cite{War08}, \cite{Bor18}, \cite{Pet21}.
It would be nice to see if this $q$-independence extends to the Pfaffian case. Moreover, it would be interesting to employ our result for the analysis of half-space models of integrable probability as in \cite{BBC20}, \cite{BBCW18}, \cite{BZ19}, but this application is outside of the scope of the present work.

\begin{remark}
	After completing this manuscript, Littlewood type identities for stable spin Hall--Littlewood polynomials, which are specializations of our functions, were also applied in \cite{CD21} for introducing the half-space Yang-Baxter random field and studying related dynamic systems.
\end{remark}

\subsection{Refined Littlewood identity for spin Hall--Littlewood functions}

One of possible ways to define the fully inhomogeneous 
spin Hall--Littlewood 
symmetric rational functions is the following
symmetrization form introduced in \cite{BP18}:
\begin{equation*}
\begin{split}
F_\lambda(u_1,\ldots,u_N )
&=
\sum_{\sigma\in S_N}
\sigma
\Biggl( 
\prod_{1\le i<j\le N}\frac{u_i-t u_j}{u_i-u_j}\,
\prod_{i=1}^{N}
\biggl(
\frac{1-t}{1-s_{\lambda_i}u_i}
\prod_{j=0}^{\lambda_i-1}\frac{ u_i-s_j}{1- s_j u_i}
\biggr)
\Biggr),
\end{split}
\end{equation*}
where 
$\lambda=(\lambda_1\ge\ldots\ge \lambda_N\ge0 )$ is a signature, that is, a sequence of weakly decreasing nonnegative integers.
Here $\sigma\in S_N$
acts by permuting the variables $u_i$'s.
The function $F_\lambda$ depends on the ``quantum parameter''
$t \in (0, 1)$, the variables $u_j$ and the inhomogeneities $s_x$, where $x\in \mathbb{Z}_{\ge0}$.
By setting $s_x=0$ for all $x$, we obtain the reduction to the case of usual
Hall--Littlewood symmetric polynomials.

Our main result is a generalization of the refined Littlewood identity \eqref{class_id} to the case of the spin Hall--Littlewood functions. 

To formulate the result we need some notation given below. Namely, $m_0(\lambda)$ is the number of parts in signature $\lambda$ equal to zero, and 
$(a;t)_k=(1-a)(1-at)\ldots(1-at^{k-1})$ is the $t$-Pochhammer symbol.
\begin{theorem} \label{lit_id} 
	Let $\gamma\ne0$ be an arbitrary complex number and let variables $u_1, \dots, u_{2n}$ satisfy restrictions \eqref{eq:admissible_u} below which are needed for some convergence conditions.
	Then spin Hall--Littlewood symmetric rational functions satisfy the following refined Littlewood identity:
	\begin{equation} \label{littlewood_id}
	\begin{split}
	&
	\sum_{\lambda: m_{i}(\lambda) \in 2 \mathbb{Z}_{\geq 0}} \frac{1}{( t; t)_{m_{0}(\lambda)}} \prod_{j = 1}^{m_{0}(\lambda) / 2} (1 - s_{0}^2 \gamma^{-1} t^{2j - 2})(1 - \gamma t^{2j-1})  \prod_{j=1}^{2n} (1 - s_0 u_j)		\\&\hspace{70pt}\times
	\prod_{i = 1}^{\infty} \prod_{j = 1}^{m_{i}(\lambda) / 2} \frac{1 - s_{i}^2 t^{2j - 2}}{1 - t^{2j}} F_{\lambda} (u_1, \dots, u_{2n})
	\; \;\;
	= 
	\prod_{1 \leq i<j \leq 2n}
	\left(
	\frac{1-t u_i u_j}{u_i - u_j}
	\right)
	\\&\hspace{40pt}\times
	\pf_{1\leq i < j \leq 2n}
	\left[
	\frac{ (u_i - u_j)((1 - t)(1 - s_0 u_i)(1- s_0 u_j) + (1 - \gamma)(t-s_{0}^{2}\gamma^{-1})(1-u_i u_j))}
	{(1-u_i u_j) (1-t u_i u_j)}
	\right].
	\end{split}
	\end{equation}
\end{theorem}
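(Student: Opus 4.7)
The plan is to realize both sides of \eqref{littlewood_id} as partition functions of the spin six vertex model in two different geometries, and then to match them via the Yang--Baxter equation (and a suitable reflection equation) following the Wheeler--Zinn-Justin strategy \cite{WZJ16}. The left-hand side is to come from a rectangular lattice in which the top boundary is ``folded onto itself'' by the sum over even-multiplicity signatures $\lambda$, with the zeroth column additionally carrying a reflecting boundary whose $K$-matrix is indexed by the free parameter $\gamma$. The right-hand side is to come from the same model on a triangular domain with a suitably decorated domain wall boundary condition, where the Pfaffian structure is forced by the triangular geometry in the spirit of Kuperberg \cite{Kup02}.

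First, I would recall the vertex-model interpretation: $F_\lambda(u_1,\dots,u_{2n})$ is the partition function of $2n$ horizontal rows carrying rapidities $u_j$ and column inhomogeneities $s_x$, with up-arrows exiting the top boundary at positions dictated by $\lambda$. A standard one-column computation should then rewrite the weighted sum on the LHS as the partition function of the same model whose top boundary is paired with itself in columns $i\ge 1$ (this is where the factor $\prod_{i\ge 1}\prod_{j=1}^{m_i(\lambda)/2}(1-s_i^2t^{2j-2})/(1-t^{2j})$ appears, as in the unrefined Littlewood identity), while the zeroth column is summed against a $\gamma$-dependent diagonal weight built so that the resulting boundary matrix satisfies the six-vertex reflection equation.

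Second, I would derive the RHS. The scalar prefactor $\prod_{i<j}(1-tu_iu_j)/(u_i-u_j)$ is a standard Jacobi-type product that factors out when one symmetrizes over $S_{2n}$ using the $R$-matrix. The remaining factor is to be identified with the partition function on a triangular lattice with the $K$-matrix on the diagonal; such partition functions are known to admit Pfaffian form by a standard free-fermionic / Stembridge-type argument, and the individual $2\times 2$ Pfaffian entry is pinned down by explicit computation of the $n=1$ case (where the summation on the LHS reduces to a $t$-Gauss-type sum).

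The bulk of the proof is then an induction on $n$: using the reflection equation, one pushes a row-pair of $R$-matrices across the $K$-matrix and shows that both sides satisfy the same recursion after summing out a pair of variables. The main obstacle, as I see it, is the explicit construction of the $K$-matrix itself --- it must simultaneously produce the refined LHS weight $\prod_{j=1}^{m_0(\lambda)/2}(1-s_0^2\gamma^{-1}t^{2j-2})(1-\gamma t^{2j-1})/(t;t)_{m_0(\lambda)}$ through its column sum, and the mixed two-point expression $(1-t)(1-s_0u_i)(1-s_0u_j)+(1-\gamma)(t-s_0^2\gamma^{-1})(1-u_iu_j)$ inside the Pfaffian entry on the RHS. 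A secondary technical difficulty is controlling convergence of the unbounded sum over $m_0(\lambda)\in 2\mathbb{Z}_{\ge 0}$, which is precisely what the admissibility condition \eqref{eq:admissible_u} is designed to ensure.
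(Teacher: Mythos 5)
Your high-level skeleton (realize both sides as vertex-model partition functions, use the Yang--Baxter equation, follow \cite{WZJ16}) matches the paper, and you correctly identify the two delicate points: the special treatment of the zeroth column carrying the $\gamma$-refinement, and the role of \eqref{eq:admissible_u} for convergence. However, there are two genuine gaps in how you propose to close the argument.

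First, your justification of the Pfaffian on the right-hand side does not work. You assert that the triangular partition function ``is known to admit Pfaffian form by a standard free-fermionic / Stembridge-type argument.'' The six-vertex weights here are not free-fermionic for generic $t$ (the free-fermion condition fails for the $R$-matrix $R_z$), so no Stembridge/Pfaffian-of-a-planar-matching argument applies directly. The paper never derives the Pfaffian combinatorially from the lattice: it characterizes the (suitably normalized) triangular partition function $Z_{2n}$ by a list of properties --- symmetry, degree $2n-1$ in $u_{2n}$, a recursion at $u_{2n}=u_{2n-1}^{-1}$, an evaluation at the special point $u_{2j-1}=t$, $u_{2j}=1/t^2$, and the $n=1$ base case --- then checks that the explicit Pfaffian ansatz satisfies the same properties, and concludes by an Izergin--Korepin / Lagrange-interpolation uniqueness argument. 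Without some replacement for this step, your proof of the right-hand side is missing.

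Second, your proposed induction on $n$ via the recursion alone under-determines the answer. A polynomial of degree $2n-1$ in $u_{2n}$ is pinned down at only the $2n-1$ points $u_{2n}=u_j^{-1}$ by the recursion plus symmetry, so two candidates can still differ by $c\prod_{i<j}(1-u_iu_j)$. The paper kills this residual constant with the extra evaluation at $(t,1/t^2,\dots,t,1/t^2)$ (its property \textbf{4}); your sketch contains no analogue of this step, and without it the induction does not close. Separately, the paper's mechanism for the left boundary is not a reflection-equation $K$-matrix but the intertwining identity $T_{\pm}\ket{\rm e;\alpha}=T^{*}_{\pm}\ket{\rm e;\alpha}$ for an explicit weighted sum $\ket{\rm e;\alpha}$ over even-multiplicity states, verified column by column from \eqref{eq:w_wstar_relation}; you flag the construction of your $K$-matrix as ``the main obstacle,'' i.e.\ you leave precisely this ingredient unconstructed.
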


\subsection{Sketch of proof}
Our approach follows the work of M. Wheeler and P. Zinn-Justin \cite{WZJ16} and the work of L. Petrov \cite{Pet21}. Namely, we represent spin Hall--Littlewood rational functions as certain partition functions, using the integrable model of deformed bosons. 
Then we consider a partition function that can be identified with some weighted sum of spin Hall--Littlewood functions. After that we use the Yang--Baxter equation to replace our partition function with equal partition function of the six vertex model with finitely many vertices. It allows us to prove some properties of the function and to present a particular function (in our case it is some certain Pfaffian) with the same properties.
Finally, we use Lagrange interpolation to verify that our properties determine the function uniquely. This technique goes back to Izergin and Korepin \cite{Ize87}, \cite{KBI93}.

\subsection{Notation}
\label{sec:not}

Let us introduce some notation.

Each signature $\lambda=(\lambda_1\ge \lambda_2\ge \ldots\ge \lambda_N\ge0 )$ can be written in multiplicative form as $\lambda=(0^{m_0} 1^{m_1} 2^{m_2} \dots)$, where $m_i$ is the multiplicity of $i$ in $\lambda$. Throughout the paper we will use this notation correspondence.

We often deal with tensor products of the same space, so we use upper indices to point out in which component a certain operator acts. For example, if $\omega$ is a  $4\times4$ matrix and we have a $2^n$-dimensional tensor power of $n$ $2$-dimensional spaces, then $\omega^{(i, {i+1})}= \mathbbm{1}^{\otimes ({i - 1})} \otimes \omega \otimes \mathbbm{1}^{\otimes ({n - i - 1})}$ where $\mathbbm{1}$  is a $2\times2$ identity matrix.

\subsection{Organization of the paper} In \Cref{sec:vertex_and_sHL} we recall the basic notation,
definitions and properties of the spin Hall--Littlewood rational symmetric functions and the integrable model related to them.
In \Cref{sec:sHL_proof} we prove the refined Littlewood identity 
for the spin Hall--Littlewood functions.
Finally, in \Cref{sec: spec}
we discuss the reduction of the result to the classical family of Hall-Littlewood symmetric functions and write the non-refined case of our identity.

\subsection{Acknowledgements.}
I would like to thank Leonid Petrov for setting the problem, valuable discussions and constant attention to this work.
The author is partially supported by International Laboratory of Cluster Geometry NRU HSE, RF Government grant, ag. \textnumero 075-15-2021-608.

\section{Higher spin six vertex model weights and spin Hall--Littlewood functions} \label{sec:vertex_and_sHL}

In this section we introduce certain model with higher spin six vertex weights and explain how to build spin Hall--Littlewood functions in terms of this model.

\subsection{Definition of the model}
Consider an infinite dimensional vector space $V$:
\begin{align*}
\label{space}
V 
=
{\rm Span} 
\left\{
\ket{m_0}_0
\otimes
\ket{m_1}_1 
\otimes
\ket{m_2}_2 
\otimes
\cdots
\right\},
\qquad
m_i \in \mathbb{Z}\ \forall\ i \geq 1,
\end{align*}
where only finitely many of the $m_i$ are nonzero.
It is convenient to think that 
$m_i$ represents the number of particles at site $i$.
Note that
if $m_i$ are obtained as multiplicities of some signature $\lambda$, then obviously we have $m_i \in \mathbb{Z}_{\geq 0}$  and in this case we denote the corresponding state by $|\lambda\rangle \in V$ or $\langle \lambda | \in V^*$. However, for our purposes it makes sense to work with negative integers, too.

Also, we set up a two-dimensional auxiliary vector space $W = \mathbb{C}^2$ and its basis denoted by
$
|0\rangle $ and $ |1\rangle
$.
Then the higher spin six vertex model weights $w_{u,s_i}(i_1,j_1;i_2,j_2)$ can be implemented by considering the operators $L_{u, s_i}$ acting in $W \otimes V_i$ where by $V_i$ we denote the $i^{th}$ factor of $V$. Namely,
the weight $w_{u,s}(i_1,j_1;i_2,j_2)$ is defined as $ \langle j_2| \otimes \langle i_2| \; L_{u, s} \; |j_1 \rangle \otimes |i_1\rangle $, where $i_1,i_2\in \mathbb{Z}_{\ge0}$
and $j_1,j_2\in \left\{ 0,1 \right\}$.
Graphically, we can represent this operator as in \Cref{fig:w_weights}.
\begin{figure}[htpb]
	\centering
	\includegraphics{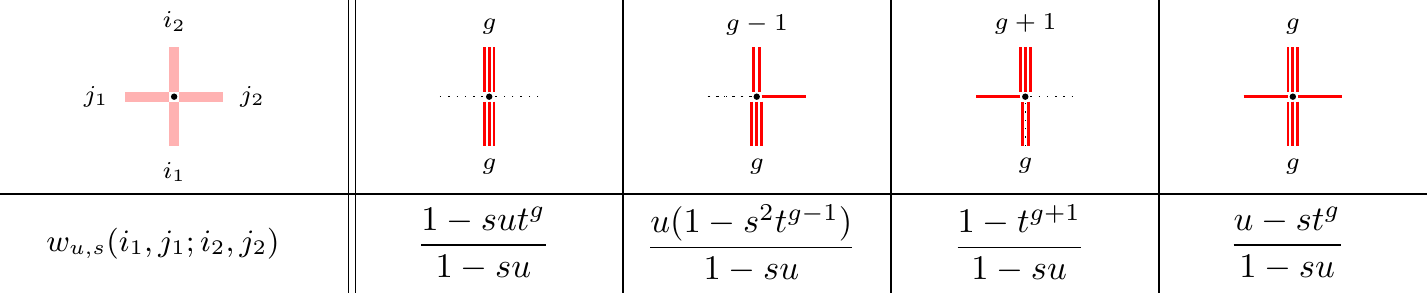}
	\caption{Vertex weights $w_{u,s}(i_1,j_1;i_2,j_2)$.  Here $i_1,i_2\in \mathbb{Z}_{\ge0}$
		and $j_1,j_2\in \left\{ 0,1 \right\}$.}
	\label{fig:w_weights}
\end{figure}

Likewise, let us define operators $L^*_{v, s}$ and the corresponding weights as follows:
\begin{equation}
\label{eq:w_wstar_relation}
w_{v,s}^*(i_1,j_1;i_2,j_2)= \langle j_2| \otimes \langle i_2| \; L^*_{v, s} \; |j_1 \rangle \otimes |i_1\rangle=\frac{(s^2;t)_{i_1}(t;t)_{i_2}}{(s^2;t)_{i_2}(t;t)_{i_1}}\,w_{v,s}(i_2,j_1;i_1,j_2).
\end{equation}

\begin{figure}[htpb]
	\centering
	\includegraphics{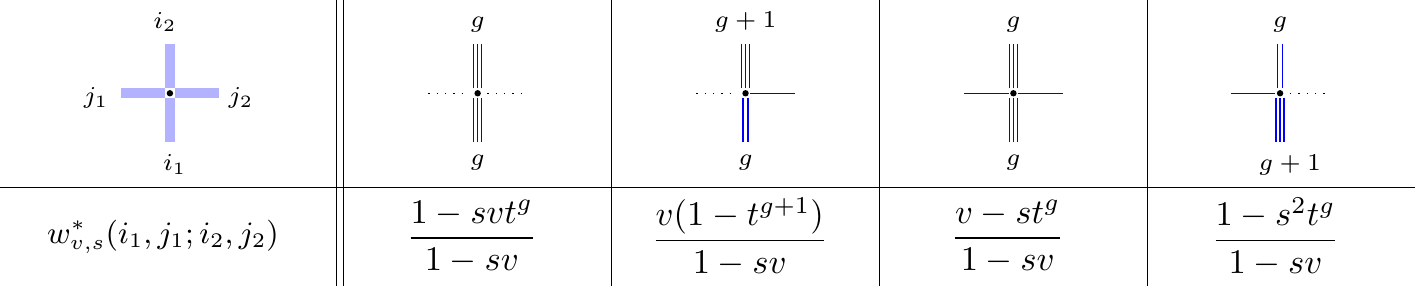}
	\caption{Vertex weights $w_{v,s}^*(i_1,j_1;i_2,j_2)$. 	Here $i_1,i_2\in \mathbb{Z}_{\ge0}$
		and $j_1,j_2\in \left\{ 0,1 \right\}$.}
	\label{fig:w_star_weights}
\end{figure}

Impose the following restrictions on the variables $u_1, \dots, u_N$:
\begin{equation}
\label{eq:admissible_u}
\left|
\frac{ u_i-s_x}{1-s_x u_i}
\right|\le 1-\varepsilon<1
\qquad \textnormal{for all $i$ and all $x=0,1,2,\ldots $.}
\end{equation}

Define the following transfer matrices acting on $W \otimes V$:
\begin{align*}
T(u)
&=
\prod_{i=0}^{\infty}
L_{u, s_i} =
\begin{pmatrix}
0 & T_{+}(x)
\medskip
\\
0 & T_{-}(x)
\end{pmatrix}
\in
{\rm End}(W \otimes V_0 \otimes V_1 \otimes \cdots),
\\
T^{*}(u)
&=
\prod_{i=0}^{\infty}
L^*_{u, s_i} =
\begin{pmatrix}
T^{*}_{+}(x) & 0
\medskip
\\
T^{*}_{-}(x) & 0
\end{pmatrix}
\in
{\rm End}(W \otimes V_0 \otimes V_1 \otimes \cdots).
\end{align*}
See \Cref{fig:T} for an illustration.
\begin{figure}[htpb]
	\centering
	\includegraphics{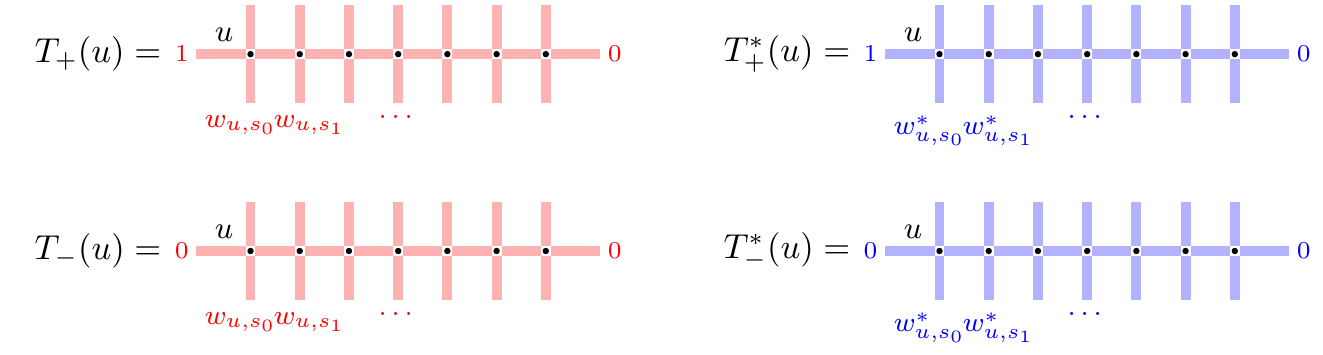}
	\caption{Graphical representation of $T$ and $T^{*}$ operators.}
	\label{fig:T}
\end{figure}
\begin{remark}
	Since we require the convergence condition \eqref{eq:admissible_u}, it follows that operators $T_{+}$ and $T^{*}_{+}$ have the vanishing property. Namely, any path that goes endlessly to the right produces the weight equal to zero, so we can forbid such paths. This means that we do not actually need to write $0$ on the right boundary in \Cref{fig:T}.
\end{remark}

Let us introduce the $R$-matrix of the six vertex model:
\begin{align*}
\label{Rmat}
R_{z}
=
\begin{pmatrix}
1 & 0 & 0 &  \frac{(1-t)z}{1-z} \\
0 & \frac{1-t z}{1-z} & 0 & 0 \\
0 & 0 & \frac{1-t z}{1-z} & 0 \\
\frac{1-t}{1-z} & 0 & 0 & t
\end{pmatrix}
\in 
{\rm End}(W \otimes W).
\end{align*}

Graphically, we denote the action of this operator by cross vertices with the weights given below:
\begin{figure}[htpb]
	\centering
	\includegraphics{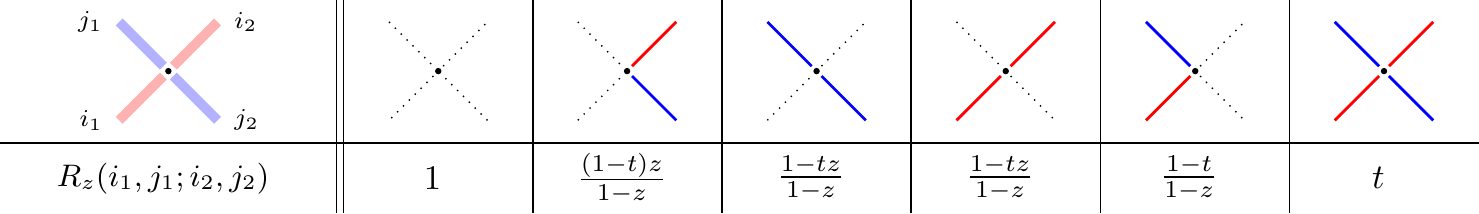}
	\caption{Cross vertex weights $R_z$. Here we have $i_1,j_1,i_2,j_2\in \left\{ 0,1 \right\}$.}
	\label{fig:R_weights}
\end{figure}

\begin{proposition}[Yang--Baxter equation]
	\label{prop:YBE_w_wstar}
	For any $i_1,i_2,j_1,j_2\in  \left\{ 0,1 \right\}$ and 
	$i_3,j_3\in \mathbb{Z}_{\ge0}$ we have
	\begin{equation}
	\label{eq:YBE_w_wstar}
	\begin{split}
	&\sum_{k_1,k_2,k_3}R
	_{u v}(i_2, i_1; k_2, k_1)\,
	w^*_{v,s} (i_3, k_1; k_3, j_1)\,
	w_{u,s}(k_3,k_2; j_3,j_2) \\
	&\hspace{100pt}
	= 
	\sum_{k'_1,k'_2,k'_3} w^*_{v,s} (k'_3, i_1; j_3, k'_1)\,
	w_{u,s}(i_3,i_2; k'_3,k'_2)\,
	R_{u v}(k'_2, k'_1; j_2, j_1),
	\end{split}
	\end{equation}
	or, equivalently,
	\begin{equation}
	\label{RLL}
	R_{uv}^{(12)} L^{*(1)}_{v, s} L^{(2)}_{u, s} = L^{*(1)}_{v, s} L^{(2)}_{u, s} R_{uv}^{(12)}.
	\end{equation}
\end{proposition}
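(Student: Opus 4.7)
The plan is to reduce the asserted Yang--Baxter equation \eqref{eq:YBE_w_wstar}, equivalently \eqref{RLL}, to the standard Yang--Baxter equation for the higher spin $L$-operator, of the form
$$R^{(12)}_{uv}\,L^{(1)}_{u,s}\,L^{(2)}_{v,s} \;=\; L^{(2)}_{v,s}\,L^{(1)}_{u,s}\,R^{(12)}_{uv},$$
which is known from the quantum integrable systems literature (see e.g.\ \cite{Bor17,BP18}). Note that the identity we want has $L^*$ and $L$ in the \emph{same} order on both sides, whereas the standard form swaps the auxiliary spectral parameters between the two $L$-vertices. This is compatible precisely because the defining relation \eqref{eq:w_wstar_relation} of $L^*$ effectively interchanges the vertical input and output indices of a single $L$-vertex, and this vertical flip compensates the horizontal swap of $u$ and $v$.

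First I would use \eqref{eq:w_wstar_relation} to rewrite every occurrence of $w^*$ in \eqref{eq:YBE_w_wstar} as a multiple of the ordinary $w$, picking up a ratio of $(s^2;t)$ and $(t;t)$ Pochhammer symbols that depends only on the vertical indices. On the left-hand side this ratio involves $i_3, k_1, k_3$; on the right-hand side it involves $k'_3, i_1, j_3$. Using arrow conservation at each vertex (the sum $i+j$ of vertical-in and horizontal-in equals $i+j$ of vertical-out and horizontal-out), these Pochhammer prefactors telescope across the internal sums and simplify to the same overall constant on both sides.

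What remains after this reduction is an identity between sums of products of $R$ and $w$ weights that is exactly the standard $RLL = LLR$ relation, after relabeling of the two auxiliary spaces consistent with the vertical flip contained in the definition of $L^*$. Since the auxiliary space $W$ is two-dimensional, arrow conservation decomposes this remaining identity into a handful of cases indexed by $(i_1,i_2,j_1,j_2)\in\{0,1\}^4$ (many of which are trivial or related by symmetry), each reducing to a polynomial identity in $u,v,s$ and the vertical index $i_3$ that can be verified directly or extracted from the cited references.

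The main obstacle I anticipate is the bookkeeping of the Pochhammer prefactors: one has to track exactly which vertical index of which vertex is being ``flipped'' when passing from $w^*$ to $w$, and show that the resulting ratios on the two sides coincide after summation over the intermediate states $(k_1,k_2,k_3)$ and $(k'_1,k'_2,k'_3)$. This is purely combinatorial bookkeeping, but it is the step where errors are easiest to make; once it is completed correctly, the reduction to the known $RLL$ relation is immediate.
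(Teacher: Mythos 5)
The paper's own ``proof'' is a one-line appeal to direct computation, so any honest verification would do; the issue is that your headline strategy --- substitute \eqref{eq:w_wstar_relation} and land on the standard $RLL$ relation --- does not go through as described. Three things break. First, the gauge prefactor $\frac{(s^2;t)_{i_1}(t;t)_{i_2}}{(s^2;t)_{i_2}(t;t)_{i_1}}$ attached to a $w^*$-vertex involves the \emph{intermediate} vertical index: on the left-hand side of \eqref{eq:YBE_w_wstar} it is $\frac{(s^2;t)_{i_3}(t;t)_{k_3}}{(s^2;t)_{k_3}(t;t)_{i_3}}$ with $k_3=i_3+j_1-k_1$ varying with the summation variable $k_1$, and on the right-hand side it is a ratio in $k'_3=j_3+i_1-k'_1$; these are not constants and there is no telescoping that removes them from under the sums. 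Second, and more structurally, the flip $w^*_{v,s}(i_3,k_1;k_3,j_1)\propto w_{v,s}(k_3,k_1;i_3,j_1)$ turns the chained composition $i_3\to k_3\to j_3$ along the vertical line into two $w$-vertices that \emph{both} have $k_3$ as their vertical input; that is not the composition pattern of the standard $RLL$ relation, so no relabeling of auxiliary spaces recovers it. Third, the cross weights here are $R_{uv}$, a function of the \emph{product} of the spectral parameters, whereas the same-type $RLL$ relation for these $L$-operators carries an $R$-matrix in the \emph{ratio} $u/v$ (indeed, the $R_z$ of the paper conserves $i_1-i_2=j_1-j_2$ rather than $i_1+i_2=j_1+j_2$, i.e.\ it is the partially transposed, ``crossed'' $R$-matrix); your substitution never touches the $R$-weights, so the mismatch cannot be absorbed. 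Turning the uncrossed $RLL$ into \eqref{RLL} genuinely requires the crossing machinery --- the inversion relation for $L$ together with crossing unitarity of $R$ --- or equivalently a transpose of the \emph{entire} three-vertex identity in the quantum space, not a vertex-by-vertex replacement.

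Your fallback --- checking the finitely many cases of $(i_1,i_2,j_1,j_2)\in\{0,1\}^4$ directly, with the $i_3,j_3$ dependence entering only through explicit rational functions of $t^{i_3}$ and $s$ --- is sound and is essentially what the paper does (it states the proposition is proved ``by direct computations'' and omits them); citing the analogous $w$--$w^*$ Yang--Baxter equation from \cite{BP18} would also be legitimate. But if you keep the reduction as the centerpiece, you must either supply the crossing argument or abandon the claim that the residue of the substitution ``is exactly the standard $RLL$ relation.''
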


\begin{proof}
	The proof is by direct computations, and we omit them.
\end{proof}

\begin{figure}[htpb]
	\centering
	\includegraphics{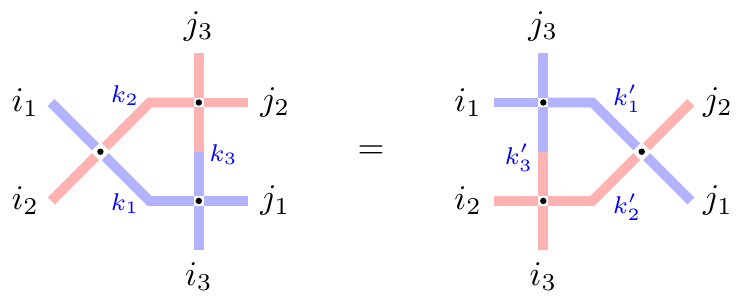}
	\caption{Graphical illustration of the Yang--Baxter equation \eqref{eq:YBE_w_wstar}.}
	\label{fig:YBE_w_wstar}
\end{figure}

\begin{remark}
	It is important that cross vertex weights in the Yang--Baxter equation \eqref{eq:YBE_w_wstar} do not depend on $s$. This observation allows us to iterate this interchange relation horizontally and get an
	equation for $T$ and $T^*$ operators which is the same as \eqref{RLL} with $L_{u, s}$ and $L^{*}_{v, s}$ replaced by $T(u)$ and $T^{*}(v)$, respectively.
\end{remark}

\subsection{Spin Hall-Littlewood functions}
Now we are able to give a definition of the spin Hall--Littlewood rational functions in terms of our model. Namely, they are given by the following formula:
\begin{align*}
F_{\lambda}(u_1,\dots,u_N)
&=
\bra{0}
T_{+}(u_1)
\dots
T_{+}(u_N)
\ket{\lambda}.
\end{align*}

In other words, we consider the weighted sum over all the up-right paths ensembles in $\mathbb{Z}_{\ge0}\times \left\{ 1,\ldots,N  \right\}$ with the following properties:

\begin{enumerate}[\bf1.\/]
	
	\item Each path comes from the left edge and reaches the top boundary at the corresponding coordinate $\lambda_i$.
	
	\item No two paths can share the same horizontal line.
	
	\item In the vertex $(x, i) \in \mathbb{Z}_{\ge0}\times \left\{ 1,\ldots,N  \right\}$ we take the weight $w_{u_i, s_x}$.
	
\end{enumerate}
The example of such an ensemble is given in \Cref{fig:F_ex}.
\begin{figure}[htpb]
	\centering
	\includegraphics{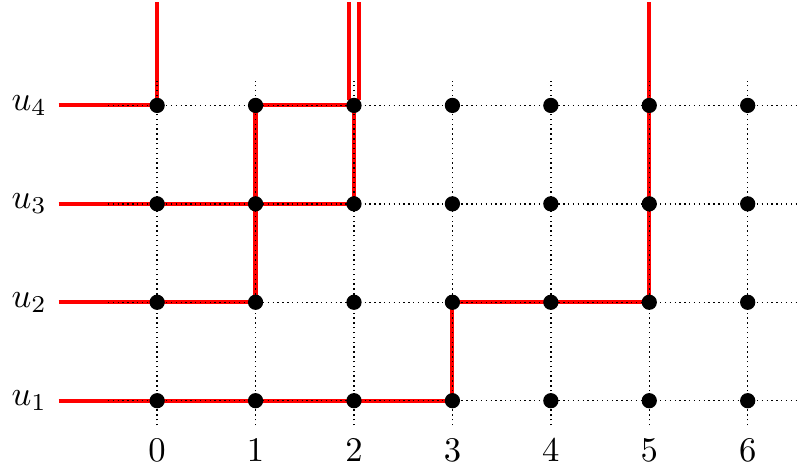}
	\caption{An example of a path configuration
		contributing to the partition function $F_\lambda(u_1,u_2,u_3,u_4)$, where
		$\lambda=(5,2,2,0)$.}
	\label{fig:F_ex}
\end{figure}
\subsection{Refinement}
One can define a generalization of this partition function by adding an extra parameter $\alpha \in \mathbb{C}$. Namely, consider a vector space \begin{align*}
\label{alpha-space}
V(\alpha) 
=
{\rm Span} 
\left\{
\ket{m_0+ \alpha}_0
\otimes
\ket{m_1}_1 
\otimes
\ket{m_2}_2 
\otimes
\cdots
\right\},
\qquad
m_i \in \mathbb{Z}\ \forall\ i \geq 1,
\end{align*}
and the corresponding family of partition functions $$F^{\alpha}_\lambda(u_1, \dots, u_N) = \bra{0; \alpha}
T_{+}(u_1)
\dots
T_{+}(u_N)
\ket{\lambda; \alpha},$$
where $\lambda$ is a signature and
\begin{align*}
|\lambda; \alpha\rangle
=
\ket{m_0(\lambda) + \alpha}_0
\otimes
\ket{m_1(\lambda)}_1
\otimes
\ket{m_2(\lambda)}_2
\otimes
\cdots
.
\end{align*}
Let $\gamma = t^{\alpha}$. Since the only difference between $F^{\alpha}_\lambda$ and $F_\lambda$ comes from different zero column weights, it is easy to express one partition function through another: 
\begin{equation}
\label{F_alpha}
\begin{split}
&F^{\alpha}_\lambda(u_1, \dots, u_N) = \frac{(\gamma t; t)_{m_0}}{( t, t)_{m_0}} \prod_{j=1}^{N} \frac{1 - \gamma s_0 u_j}{1 - s_0 u_j}
\left[ F_{\lambda}(u_1, \dots, u_N)
\Big\vert_{\text{$s_0\to \gamma s_0$}}\right],\\
&F_{\lambda}(u_1, \dots, u_N) = F^{0}_{\lambda}(u_1, \dots, u_N).
\end{split}
\end{equation}

\section{Refined Littlewood identity. Proof.}
\label{sec:sHL_proof}

In this section we prove the refined Littlewood identity (Theorem~\ref{lit_id}).
\subsection{A property of the transfer matrices}
\begin{lemma} \label{lem_T_op}
	Consider the following formal weighted sum of all states with even multiplicities:
	\begin{align*}
	\label{e-vect}
	\ket{\rm e;\alpha}
	=
	\sum_{\substack{
			m_{i}(\lambda) \in 2 \mathbb{Z}_{\geq 0} \\ m_0(\lambda) \in 2 \mathbb{Z} 
	}}
	\
	c_{\lambda}(t; \alpha)
	\ket{\lambda;\alpha},
	\end{align*}
	where the weights are given by
	\begin{align*}
	c_{\lambda}(\alpha,t) 
	= 
	\prod_{i = 1}^{\infty} \prod_{j = 1}^{m_{i}(\lambda) / 2} \frac{1 - s_{i}^2 t^{2j - 2}}{1 - t^{2j}}
	\times
	\left\{
	\begin{array}{cl}
	\displaystyle{
		\prod_{j = 1}^{m_{0}(\lambda) / 2} \frac{1 - s_{0}^2 \gamma t^{2j - 2}}{1 - \gamma t^{2j}}
	},
	&
	m_0(\lambda) \geq 0,
	\\ \\ 
	\displaystyle{
		\prod_{j=1}^{-m_0(\lambda)/2}
		\frac{1 - \gamma t^{-2j+2}}{1 - s_{0}^2 \gamma t^{-2j}}
	},
	&
	m_0(\lambda) \leq 0.
	\end{array}
	\right.
	\end{align*}
	
	Then the transfer matrices $T_{\pm}$ and $T^{*}_{\pm}$ have the following property:
	\begin{equation} \label{eq: lem_T_op}
	T_{+}\ket{\rm e;\alpha} = T^{*}_{+}\ket{\rm e;\alpha}, \quad\quad T_{-}\ket{\rm e;\alpha} = T^{*}_{-}\ket{\rm e;\alpha}.
	\end{equation}
\end{lemma}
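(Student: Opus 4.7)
The coefficient $c_\lambda(\alpha, t)$ in the definition of $\ket{\mathrm{e};\alpha}$ factorizes over sites as $c_\lambda = \prod_{i=0}^\infty c^{(i)}_{m_i(\lambda)}$, where $c^{(i)}_m$ depends only on $m$, $s_i$, $t$ (and on $\alpha$ for $i = 0$). Hence $\ket{\mathrm{e};\alpha} = \ket{e_0}_0 \otimes \ket{e_1}_1 \otimes \ket{e_2}_2 \otimes \cdots$ is a tensor product of single-site vectors $\ket{e_i}_i = \sum_{m \in 2\mathbb{Z}_{\geq 0}} c^{(i)}_m \ket{m}_i$ for $i \geq 1$, with the $\alpha$-shifted analogue at $i = 0$. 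Since $T_\pm(u)$ and $T^*_\pm(u)$ are infinite ordered products of the local operators $L^{(a,i)}_{u,s_i}$ and $L^{*(a,i)}_{u,s_i}$ coupled through one auxiliary line, the strategy is to match their action on $\ket{\mathrm{e};\alpha}$ site by site.

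I would fix an arbitrary signature $\mu$ and compute the coefficients $\bra{\mu} T_\pm(u) \ket{\mathrm{e};\alpha}$ and $\bra{\mu} T^*_\pm(u) \ket{\mathrm{e};\alpha}$. Since $\mu$ has finite support and the auxiliary path has the vanishing property, each such coefficient is a finite sum over admissible path configurations. Using the explicit weights in \Cref{fig:w_weights}, \Cref{fig:w_star_weights} and the relation \eqref{eq:w_wstar_relation} between $w$ and $w^*$, each coefficient becomes a product of local vertex weights times the appropriate $c^{(i)}_{m_i(\lambda)}$'s for the contributing $\lambda$ with even multiplicities. The key observation is that the coefficients $c^{(i)}_m$ for even $m$ are engineered so that the discrepancies introduced by the ratio $(s^2;t)_{i_1}(t;t)_{i_2}/[(s^2;t)_{i_2}(t;t)_{i_1}]$ in \eqref{eq:w_wstar_relation} are absorbed into the weights of $\ket{\mathrm{e};\alpha}$. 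The site $i = 0$ is handled by the same mechanism, with the two cases $m_0(\lambda) \geq 0$ and $m_0(\lambda) \leq 0$ in the definition of $c_\lambda$ accommodating the shift by $\alpha$ and keeping the cancellation consistent on both sides of $m_0 = 0$.

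The main obstacle I anticipate is making this site-by-site propagation rigorous. Locally, the actions of $L_{u,s_i}$ and $L^*_{u,s_i}$ on $\ket{\epsilon}_a \otimes \ket{e_i}_i$ produce genuinely different superpositions --- they couple the even and odd components of the site state to the output aux state in different ways --- so a naive termwise equality at a single site is false. The correct identity is a \emph{conditional} one: the discrepancy at site $i$ is cancelled only after composition with the vertex at a neighbouring site, which in turn creates a new discrepancy propagating further along the aux line. To organize this cleanly I would either induct from the far right of the aux path (where only trivial pass-through weights occur, since $\mu$ has finite support) toward the left, or exploit the Yang--Baxter relation \eqref{RLL} to slide $L$ past $L^*$ through a second auxiliary channel. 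Convergence of the infinite sum over $\lambda$ is controlled by the restriction \eqref{eq:admissible_u} on the $u_j$'s.
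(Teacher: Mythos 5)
Your overall frame --- pairing both sides against an arbitrary $\bra{\mu;\alpha}$, reducing to the local relation \eqref{eq:w_wstar_relation}, and absorbing the resulting weight ratios into the coefficients $c^{(i)}_m$ --- is exactly the paper's strategy. But the obstacle you flag at the end (that a termwise single-site identity fails and discrepancies must be propagated along the auxiliary line) is left unresolved in your write-up, and the missing idea is precisely what makes the paper's proof a two-line computation rather than an induction. The key structural fact is this: for a \emph{fixed} output state $\mu$, conservation of paths at each column forces the horizontal edge occupations one column at a time, and the evenness constraint on the input signature then leaves \emph{exactly one} even-multiplicity signature $\mu_+$ with $\bra{\mu;\alpha}T_{\pm}\ket{\mu_+;\alpha}\neq 0$, and exactly one $\mu_-$ for $T^*_{\pm}$; moreover the contributing lattice configuration on each side is completely frozen. (Concretely: if $m_x(\mu)$ is even the multiplicity passes through unchanged and the horizontal occupation is preserved, while if $m_x(\mu)$ is odd the incoming horizontal occupation dictates whether the site multiplicity goes up or down by one.) Thus $\bra{\mu;\alpha}T_{\pm}\ket{\mathrm{e};\alpha}=c_{\mu_+}\bra{\mu;\alpha}T_{\pm}\ket{\mu_+;\alpha}$ and similarly for $T^*_{\pm}$ with $\mu_-$: there is no ``finite sum over admissible path configurations'' and no superposition to track, only a single product of local weights on each side.

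Once this is in hand, the lemma reduces to checking that the ratio of the two frozen products of local weights equals $c_{\mu_-}/c_{\mu_+}$, which is a columnwise application of \eqref{eq:w_wstar_relation} (plus its $\alpha$-shifted analogue in column $0$, which is where the two branches of $c_\lambda$ for $m_0\gtrless 0$ enter). Your proposal asserts that the $c^{(i)}_m$ are ``engineered so that the discrepancies are absorbed'' but never exhibits the cancellation, and your two suggested repairs (right-to-left induction along the auxiliary line, or sliding $L$ past $L^*$ via \eqref{RLL}) are not carried out; the first is essentially a disguised form of the frozenness argument, and the second is not needed here at all (the Yang--Baxter equation is used later, in the transformation of the full partition function, not in this lemma). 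So the gap is concrete: without the uniqueness-and-frozenness observation you have neither identified which terms of $\ket{\mathrm{e};\alpha}$ survive the pairing nor reduced the claim to a verifiable local identity.
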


\begin{proof}
	Take any signature $\mu$ and the corresponding state
	\begin{align*}
	\langle\mu; \alpha|
	=
	\bra{m_0(\mu) + \alpha}_0
	\otimes
	\bra{m_1(\mu)}_1
	\otimes
	\bra{m_2(\mu)}_2
	\otimes
	\cdots
	\end{align*}
	with $m_0(\mu) \in \mathbb{Z}$ and $m_i(\mu) \in \mathbb{Z}_{\geq 0}$ for all $i \geq 1$.
	Note that there exists a unique $\mu_{+}$ with even multiplicities such that $\bra{\rm \mu;\alpha}T_{+} \ket{\rm \mu_{+};\alpha} \neq 0$ or $\bra{\rm \mu;\alpha}T_{-} \ket{\rm \mu_{+};\alpha} \neq 0$ (which of these is nonzero depends on the parity of the sum over all the multiplicities). Also, denote by $\mu_{-}$ the unique signature with even multiplicities such that $\bra{\rm \mu;\alpha}T^{*}_{+} \ket{\rm \mu_{-};\alpha} \neq 0$ or $\bra{\rm \mu;\alpha}T^{*}_{-} \ket{\rm \mu_{-};\alpha} \neq 0$.
	For example, if $\mu = (6, 4, 4, 3, 2, 2, 0)$, then $\mu_{+} = (6, 6, 4, 4, 2, 2, 0, 0)$ and $\mu_{-} = (4, 4, 3, 3, 2, 2)$ (see \Cref{fig:ex} for an illustration).
	\begin{figure}[htpb]
		\centering
		\includegraphics{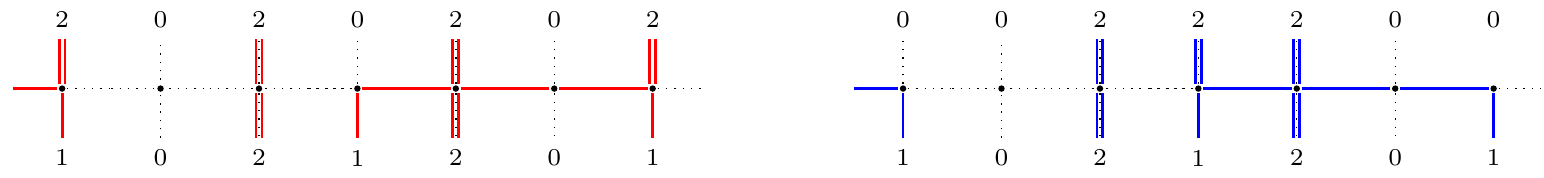}
		\caption{An illustration for the definition of $\mu_{+}$ and $\mu_{-}$ when $\mu = (6, 4, 4, 3, 2, 2, 0)$.}
		\label{fig:ex}
	\end{figure}
	
	So, we obtain 
	\begin{align*}
	\bra{\rm \mu;\alpha}T_{\pm} \ket{\rm e;\alpha} = c_{\mu_{+}} \bra{\rm \mu;\alpha}T_{\pm} \ket{\rm \mu_{+};\alpha} , \qquad \bra{\rm \mu;\alpha}T^{*}_{\pm} \ket{\rm e;\alpha} = c_{\mu_{-}} \bra{\rm \mu;\alpha}T^{*}_{\pm} \ket{\rm \mu_{-};\alpha}.
	\end{align*}
	It remains to check that \begin{equation}\label{T_pm_rel}
	c_{\mu_{+}} \bra{\rm \mu;\alpha}T_{\pm} \ket{\rm \mu_{+};\alpha} = c_{\mu_{-}} \bra{\rm \mu;\alpha}T^{*}_{\pm} \ket{\rm \mu_{-};\alpha}.
	\end{equation}
	This equality can be seen from the special case of equation~\eqref{eq:w_wstar_relation} and its analogue for the zero column.
	\begin{figure}[htpb]
		\centering
		\includegraphics{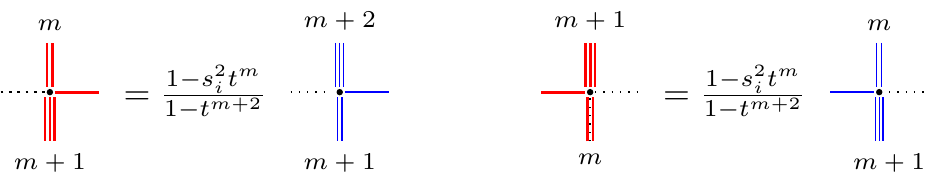}
		\includegraphics{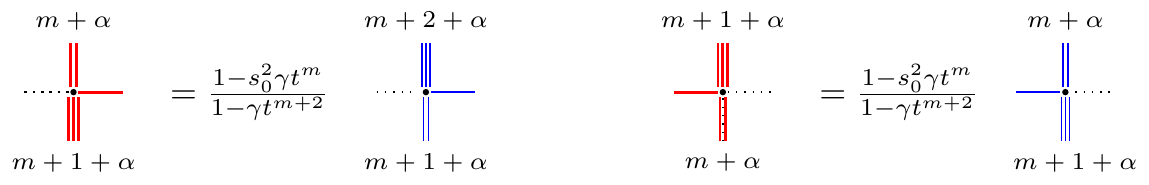}
		\caption{The correspondence between vertex weights in both sides of \eqref{T_pm_rel}.}
		\label{fig:pr_w}
	\end{figure}
	Indeed, to get $\bra{\rm \mu;\alpha}T_{\pm} \ket{\rm \mu_{+};\alpha}$ with given $\bra{\rm \mu;\alpha}T^{*}_{\pm} \ket{\rm \mu_{-};\alpha}$ we need to do the replacements as in \Cref{fig:pr_w}. These replacements produce some factor, meanwhile the ratio $c_{\mu_{+}}/c_{\mu_{-}}$ precisely compensates this factor. This concludes the proof.
\end{proof}

Graphically, our statement can be represented as in \Cref{fig:T_e}.
\begin{figure}[htpb]
	\centering
	\includegraphics{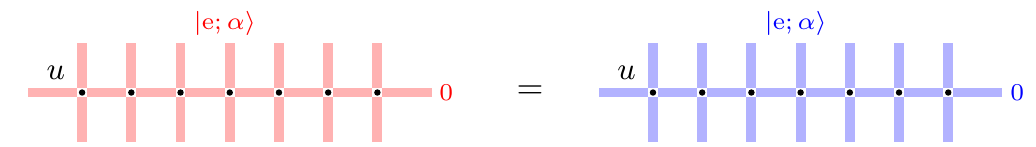}
	\caption{Graphical representation of equation \eqref{eq: lem_T_op}. Here we do not need to specify the state on the left boundary.}
	\label{fig:T_e}
\end{figure}

\subsection{Setting and transformation of the partition function}

Consider the following partition function which has an additional parameter $\alpha$:
\begin{figure}[htpb]
	\centering
	\includegraphics{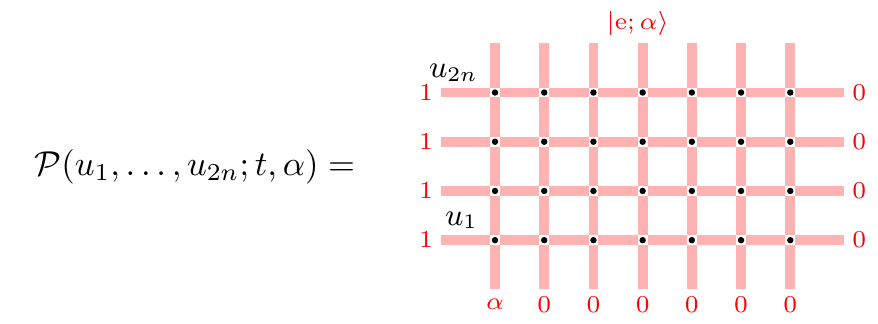}
	\caption{The left-hand side of the Littlewood identity expressed graphically as a partition function.}
	\label{fig:part_P}
\end{figure}

From the very definition we have
\begin{equation}
\label{part_alpha}
\mathcal{P}(u_1, \dots, u_{2n}; t, \alpha) 
=\sum_{\lambda: m_{i}(\lambda) \in 2 \mathbb{Z}_{\geq 0}} \prod_{j = 1}^{m_{0}(\lambda) / 2} \frac{1 - s_{0}^2 \gamma t^{2j - 2}}{1 - \gamma t^{2j}} \prod_{i = 1}^{\infty} \prod_{j = 1}^{m_{i}(\lambda) / 2} \frac{1 - s_{i}^2 t^{2j - 2}}{1 - t^{2j}} F^{\alpha}_{\lambda} (u_1, \dots, u_{2n}).
\end{equation}

Since we have up-right path ensembles and the left edge is occupied, it follows that only states with non-negative $m_0$ in $\ket{\rm e;\alpha}$ contribute to our summation.

Let us apply Lemma~\ref{lem_T_op} to the upper row. We get the first equality in \Cref{fig:12steps}.
\begin{figure}[htpb]
	\centering
	\includegraphics{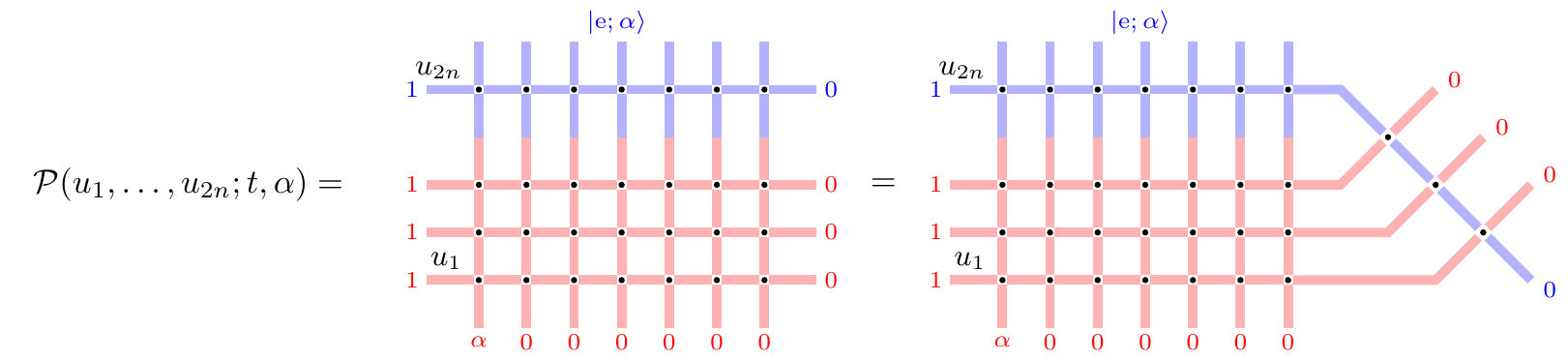}
	\caption{First step of the transformation of the partition function.}
	\label{fig:12steps}
\end{figure}
The second equality in \Cref{fig:12steps} holds due to the completely frozen cross part on the right, which has weight $1$.

Next, using the Yang--Baxter equation, one can move cross part of the partition to the left edge. Then we repeat this trick several times (see \Cref{fig:34steps} for an illustration).
\begin{figure}[htpb]
	\centering
	\includegraphics{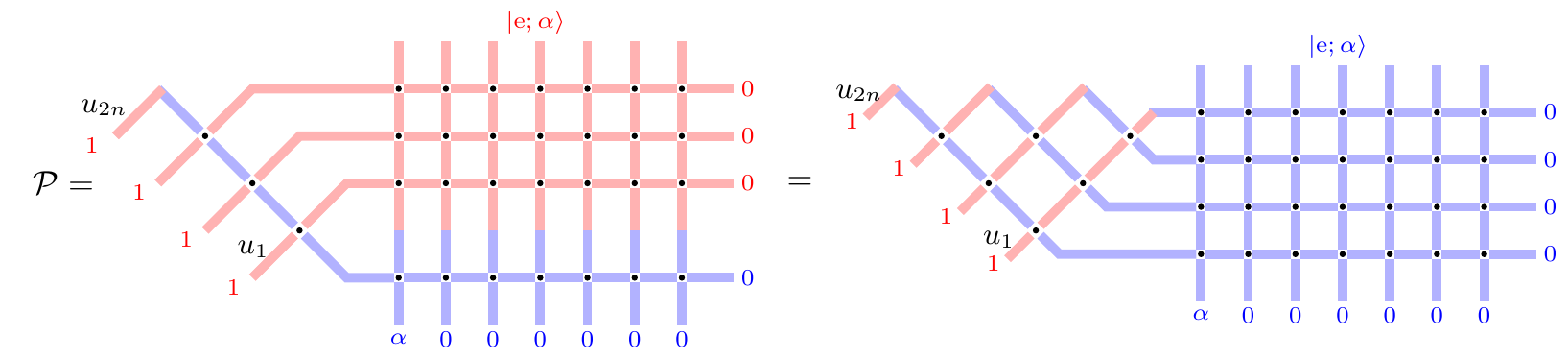}
	\caption{Next steps of the transformation of the partition function.}
	\label{fig:34steps}
\end{figure}

After moving all the crosses to the left, the partition function factorizes, and the blue frozen part on the right has weight 1. Thus, we obtain the partition function as in \Cref{fig:5step}. 
\begin{figure}[htpb]
	\centering
	\includegraphics{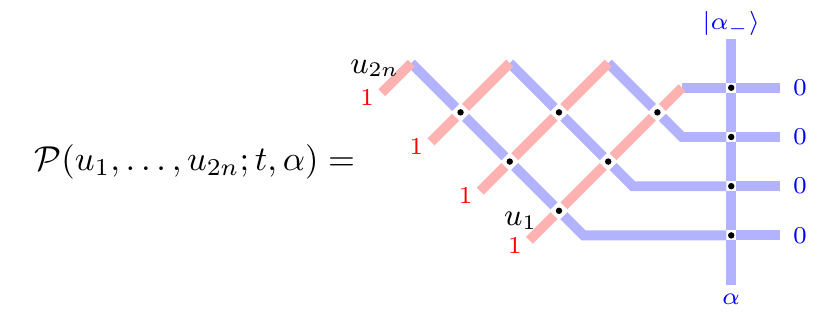}	\caption{Expression for the left-hand side of the Littlewood identity as a partition function of the inhomogeneous six vertex model with 
		weights $R_{u_iu_j}$ and decorated boundary conditions.}
	\label{fig:5step}
\end{figure}
The boundary vector $\ket{\rm \alpha_{-}}$ involved there can be expressed explicitly as the following weighted sum:
\begin{equation*}
\ket{\rm \alpha_{-}} = \sum_{k=0}^{\infty} \prod_{j=1}^{k} \frac{1 - \gamma t^{-2j+2}}{1 - s_{0}^2 \gamma t^{-2j}} \ket{ \alpha - 2k}.
\end{equation*}

\subsection{Properties of the partition function}
\begin{lemma}
	\label{prop_lem}
	Let $Z_{2n}$ denote the partition function $\mathcal{P} (u_1, \dots, u_{2n}; t, \alpha)$ as in \Cref{fig:5step} multiplied by $\prod_{1 \leq i < j \leq 2n} (1 - u_{i} u_{j}) \prod_{i = 1}^{2n} (1 - s u_i)$.
	Then $Z_{2n}$ possesses the following properties:
	\begin{enumerate}[\bf1.\/]
		\item $Z_{2n}$ is symmetric in $\{u_1, \dots, u_{2n}\}$.
		
		\item $Z_{2n}$ is a polynomial in $u_{2n}$ of degree $2n - 1$.
		
		\item Setting $u_{2n} = u_{2n - 1}^{-1}$, we have the recursion relation
		$$
		Z_{2n}|_{u_{2n} = u_{2n - 1}^{-1}} = (1 - t) (1 - \gamma s_{0} u_{2n}) (1 - \gamma s_{0} u_{2n-1}) \prod_{j = 1}^{2n-2} (1 - t u_{j} u_{2n}) (1 - t u_{j} u_{2n-1}) Z_{2n - 2}.
		$$
		
		\item Under the specialization $u_{2j - 1} = t$, $u_{2j} = 1 / t^2$ for $1 \leq j \leq n$, we have 
		$$Z_{2n}(t,  1 / t^2, \dots, t,   1 / t^2) = \gamma^{n} (t - 1)^{n^{2}}t^{-2n} (-(t - 1/t)^2 )^{n(n-1)/2}   (1 - s_{0} t^{-2})^n (1 -s_{0} t)^{n}.
		$$
		
		\item For $n=1$ we have
		$$Z_{2}(u_1, u_2) = (1 - t)(1 - \gamma s_{0} u_{1}) (1 - \gamma s_{0} u_{2}) + (1 - \gamma) (t - \gamma s_{0}^2) (1 - u_{1} u_{2}).$$
	\end{enumerate}
\end{lemma}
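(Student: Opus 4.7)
My plan is to establish the five properties of $Z_{2n}$ in turn, using the Yang--Baxter equation (Proposition 2.1) and its iterated forms for $T$ and $T^*$, together with direct analyses of vertex weights at specific values.

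Property 1 (symmetry) is a consequence of the $RTT=TTR$ relation \eqref{RLL}. To swap adjacent variables $u_i \leftrightarrow u_{i+1}$ in the partition function of Figure 5, I exploit that the crossing $R_{u_iu_{i+1}}$ already present in the triangle acts as an intertwiner between the two orderings of $T^*(u_i)T^*(u_{i+1})$. Moving the crossing through the relevant rows using YBE reorders the $T^*$-operators while preserving the partition function; the argument follows the standard Izergin--Korepin pattern and uses that the boundary states (the trivial state on the right and $\ket{\alpha_-}$ on top) are unaffected by this manipulation. Full symmetry follows because adjacent transpositions generate $S_{2n}$. Property 2 (polynomial degree bound in $u_{2n}$) is a direct degree count: each of the $2n-1$ crosses $R_{u_iu_{2n}}$ has its pole at $u_{2n}=u_i^{-1}$ cancelled by the prefactor $(1-u_iu_{2n})$ and then contributes at most degree one; the vertices of $T^*_+(u_{2n})$ along the top row contribute rational functions whose sole pole, at $u_{2n}=s_0^{-1}$ in the zeroth column, is cancelled by $(1-s_0 u_{2n})$ in the prefactor. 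Summing gives total $u_{2n}$-degree at most $2n-1$.

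The key technical step and principal obstacle is property 3. At $u_{2n}u_{2n-1}=1$ the corner crossing $R_{u_{2n-1}u_{2n}}$ has a simple pole cancelled by the prefactor $(1-u_{2n-1}u_{2n})$; explicitly, $(1-z)R_z\big|_{z=1}$ is an explicit $4\times 4$ operator proportional to $(1-t)$ that swaps $\ket{00}\leftrightarrow\ket{11}$ and fixes $\ket{01},\ket{10}$. Inserting this degenerate crossing at the corner of the triangle and propagating it rightward through rows $2n-1$ and $2n$ by repeated use of YBE, one shows that these two rows decouple from the remainder into a scalar factor times $Z_{2n-2}$. The delicate part is the combinatorial bookkeeping of this scalar: the collapsed $L^*$-weights along the two rows, together with the contribution of the zeroth-column boundary vector $\ket{\alpha_-}$, must recombine into precisely $(1-t)(1-\gamma s_0 u_{2n})(1-\gamma s_0 u_{2n-1})\prod_{j=1}^{2n-2}(1-tu_ju_{2n})(1-tu_ju_{2n-1})$. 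A useful intermediate observation is that the factors $(1-tu_ju_{2n-1})(1-tu_ju_{2n})$ are exactly what the remaining $2n-2$ crosses in the triangle contribute after the corner $R$-matrix has been resolved.

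Properties 4 and 5 are verified by direct computation. For property 5 the $n=1$ lattice has a single cross and two $T^*_+$-operators; enumerating the few contributing path configurations and multiplying by $(1-u_1u_2)(1-s_0u_1)(1-s_0u_2)$ produces the bilinear expression stated. For property 4, substituting $u_{2j-1}=t$ and $u_{2j}=1/t^2$ causes substantial freezing in the lattice: the $R$-weight $\frac{1-tz}{1-z}$ vanishes at $z=u_{2j-1}u_{2k}=1/t$ for $k>j$, killing many crossing configurations, and further vanishings occur in the $L^*$-weights at $u\in\{t,1/t^2\}$. What remains is a small enumerable set of frozen lattice configurations whose weight, combined with the prefactor $\prod_{i<j}(1-u_iu_j)\prod_i(1-s_0u_i)$ evaluated at the specialization, reproduces the claimed value $\gamma^n(t-1)^{n^2}t^{-2n}(-(t-1/t)^2)^{n(n-1)/2}(1-s_0t^{-2})^n(1-s_0t)^n$.
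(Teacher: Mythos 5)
Your treatment of properties 2--5 follows the paper's own route (clearing denominators for the degree count, the degeneration of $(1-z)R_z$ at $z=1$ for the recursion, freezing at $u_iu_j=1/t$ for property 4, direct computation for $n=1$), and your observation that $(1-z)R_z\big|_{z=1}$ is $(1-t)$ times the operator swapping $\ket{00}\leftrightarrow\ket{11}$ and fixing $\ket{01},\ket{10}$ is correct and, if anything, sharper than what the paper writes. The problem is property 1. The crossing $R_{u_iu_{i+1}}$ that is already present in the triangle is \emph{not} an intertwiner for two $T^{*}$-rows: the Yang--Baxter equation \eqref{RLL} it satisfies exchanges one starred row $L^{*}_{v,s}$ with one \emph{unstarred} row $L_{u,s}$, and $R$ is a function of the product $u_iu_{i+1}$, whereas the vertex that transposes $T^{*}(u_i)$ with $T^{*}(u_{i+1})$ must be a function of the ratio --- this is exactly the paper's auxiliary weight $\bar r_{u_i/u_{i+1}}$ of \eqref{eq:YBE_r_bar}. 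Moreover, even with the right intertwiner you cannot ``use up'' a crossing that is part of the partition function without changing it; the paper instead inserts a pair $r_{u_{i+1}/u_i}$, $\bar r_{u_i/u_{i+1}}$ at the two boundaries (where they act as the identity on the fixed boundary states), transports them through the triangle of product-type crossings via the mixed ratio/product Yang--Baxter equations \eqref{YB_r}, \eqref{YB_r_bar} and through the column of $w^{*}$-weights via \eqref{eq:YBE_r_bar}, and finally absorbs them into the existing crossing $R_{u_iu_{i+1}}$ by the unitarity relation \eqref{unit_r}. None of this machinery appears in your argument, and without it the symmetry claim is unproved. (A shortcut you could legitimately take instead: by \eqref{part_alpha}, $\mathcal{P}$ is a sum of the symmetric functions $F^{\alpha}_{\lambda}$ with $u$-independent coefficients, and the prefactors $\prod(1-u_iu_j)\prod(1-s_0u_i)$ are symmetric, so $Z_{2n}$ is symmetric --- but then you must invoke the symmetry of $F_\lambda$ from its symmetrization formula rather than from the lattice picture.)

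A smaller point on property 2: as written, your count enumerates $2n-1$ crossings \emph{plus} the zeroth-column vertex of the top row, each contributing degree at most one in $u_{2n}$ after the prefactors are absorbed, which sums to $2n$, not $2n-1$. To land on $2n-1$ you need the additional (configuration-by-configuration) observation that in every contributing configuration at least one of these $2n$ vertices carries a weight independent of $u_{2n}$; the paper asserts this but your enumeration, taken literally, contradicts your stated bound.
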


\begin{proof}
	
	To prove that $Z_{2n}$ is symmetric, let us introduce the vertex weights as in \Cref{fig:r_z_1}.
	\begin{figure}[htpb]
		\centering
		\includegraphics{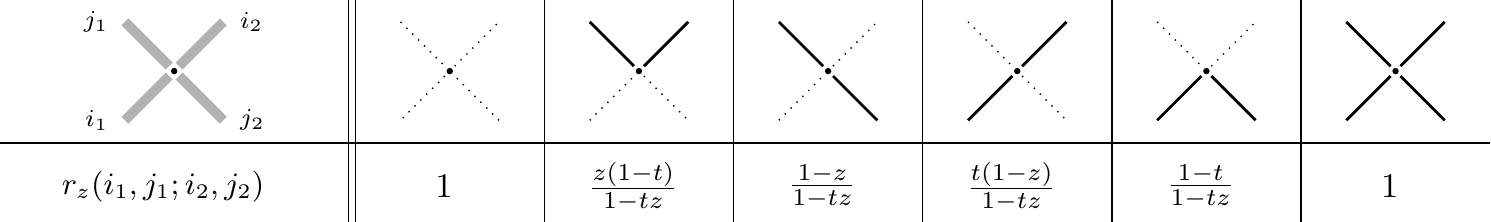}
		\label{fig:r_z}
	\end{figure}
	\begin{figure}[htpb]
		\centering
		\includegraphics{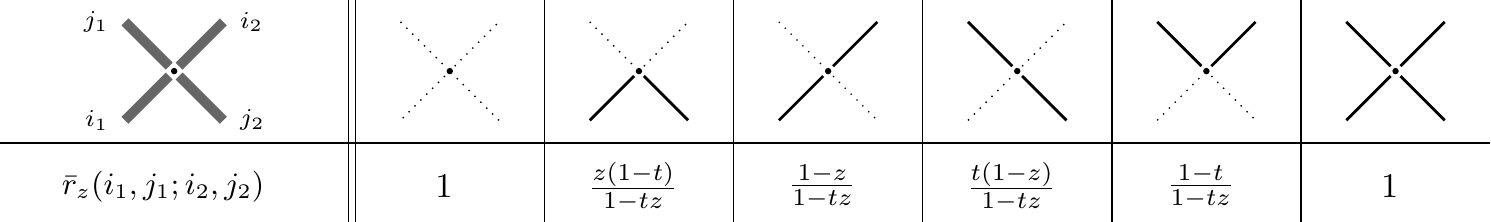}
		\caption{The vertex weights $r_z$ and $\bar{r}_z$ involved in the proof of symmetry. Here $i_1,j_1,i_2,j_2\in \left\{ 0,1 \right\}$.}
		\label{fig:r_z_1}
	\end{figure}
	One can check that they satisfy the following Yang--Baxter
	equations and the unitary relation:
	\begin{equation} \label{YB_r}
	\begin{split}
	&\sum_{k_1,k_2,k_3}r
	_{u/v}(i_2, i_1; k_2, k_1)\,
	R_{vw} (k_1, k_3; j_1, i_3)\,
	R_{uw}(k_2, j_3;j_2, k_3) \\
	&\hspace{100pt}
	= 
	\sum_{k'_1,k'_2,k'_3} R_{vw} (i_1, j_3; k'_1, k'_3)\,
	R_{uw}(i_2, k'_3;k'_2; i_3)
	r_{u/v}(k'_2, k'_1; j_2, j_1).
	\end{split}
	\end{equation}
	\begin{equation} \label{YB_r_bar}
	\begin{split}
	&\sum_{k_1,k_2,k_3}
	R_{uw} (i_3, i_2; k_3, k_2)\,
	R_{vw}(k_3, i_1;j_3, k_1)
	\bar{r}_{v/u}(k_2, k_1; j_2, j_1) \\
	&\hspace{100pt}
	= 
	\sum_{k'_1,k'_2,k'_3}
	\bar{r}
	_{v/u}(i_2, i_1; k'_2, k'_1)\,
	R_{uw} (k'_3, k'_2; j_3, j_2)\,
	R_{vw}(i_3, k'_1;k'_3, j_1).
	\end{split}
	\end{equation}
	\begin{equation}
	\label{eq:YBE_r_bar}
	\begin{split}
	&\sum_{k_1,k_2,k_3}\bar{r}_{v/u}(i_2, i_1; k_2, k_1)\,
	w^*_{v,s} (i_3, k_1; k_3, j_1)\,
	w^*_{u,s}(k_3,k_2; j_3,j_2) \\
	&\hspace{100pt}
	= 
	\sum_{k'_1,k'_2,k'_3} w^*_{v,s} (k'_3, i_1; j_3, k'_1)\,
	w^*_{u,s}(i_3,i_2; k'_3,k'_2)\,
	\bar{r}_{v/u}(k'_2, k'_1; j_2, j_1).
	\end{split}
	\end{equation}
	\begin{equation} \label{unit_r}
	\sum_{k_1,k_2,l_1, l_2}r
	_{u/v}(i_2, i_1; k_2, k_1)\,
	R_{uv} (k_1; k_2, l_1, l_2)\,
	\bar{r}_{v/u}(l_2; l_1,j_2, j_1) 
	= R_{uv} (i_2, i_1, j_2, j_1).
	\end{equation}
	
	Graphically, these equations can be viewed as in Figures \ref{fig:ybe_3_col} - \ref{fig:unit}:
	\begin{figure}[htpb]
		\centering
		\includegraphics{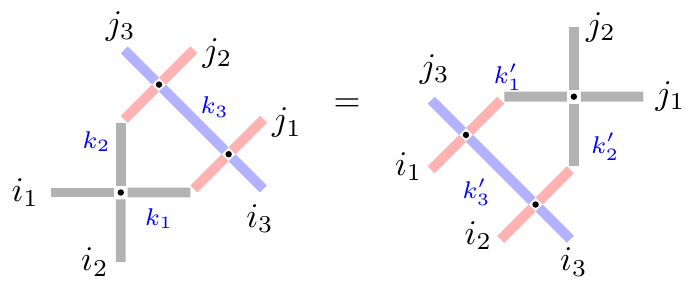}
		\caption{Graphical illustration of equation \eqref{YB_r}.}
		\label{fig:ybe_3_col}
	\end{figure}
	\begin{figure}[htpb]
		\centering
		\includegraphics{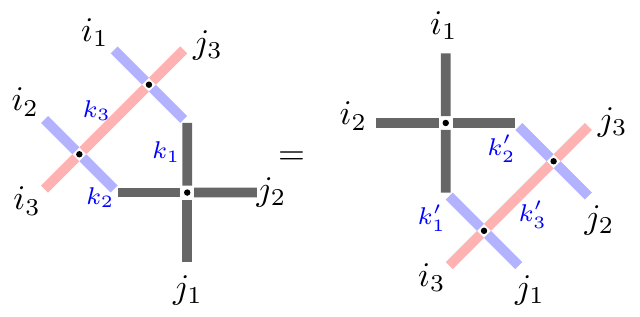}
		\caption{Graphical illustration of equation \eqref{YB_r_bar}.}
		\label{fig:ybe_3_col_d}
	\end{figure}
	\begin{figure}[htpb]
		\centering
		\includegraphics{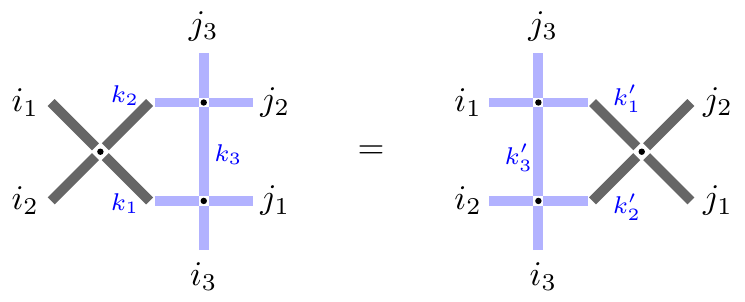}
		\caption{Graphical illustration of equation \eqref{eq:YBE_r_bar}.}
		\label{fig:ybe_r_bar}
	\end{figure}
	\begin{figure}[htpb]
		\centering
		\includegraphics{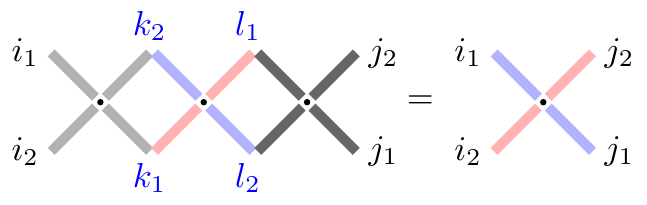}
		\caption{Graphical illustration of equation \eqref{unit_r}.}
		\label{fig:unit}
	\end{figure}
	
	To see property \textbf{1}, add to the partition function in \Cref{fig:5step} a vertex of weight $r_{u_{i+1}/ u_{i}}$ on the left at the $i$-th position, and a vertex of weight $\bar{r}_{u_{i}/u_{i+1}}$ on the right at the $i$-th position. On the one hand, these operations does not change the partition function at all. On the other hand, we can apply Yang--Baxter
	equations several times and the unitary relation to get the partition function with variables $u_i$ and  $u_{i+1}$ swapped. This concludes the proof of Property \textbf{1}.
	
	For property \textbf{2} we may assume that we are considering the same partition function but with the weights $(1 - u_i v_j)R_{u_iv_j}$ instead of $R_{u_iv_j}$ and $(1 - s_0 u_j) w^{*}_{u_j}$ instead of $ w^{*}_{u_j}$. This makes all the weights linear, in particular, in $u_{2n}$. It allows to verify that $u_{2n}$ contributes to each part of the summation $2n-1$ times with some coefficients (independent of $u_{2n}$).
	
	For property \textbf{3} let us notice that in this case $(1-u_{2n}u_{2n-1})R_{u_{2n}u_{2n-1}}(1,1;1,1) = 0$, so we should avoid this weight at the beginning, which leads to factorization of the partition function. After some computations we get the desired property.
	
	To prove property \textbf{4}, note that the chosen $u_1, \dots, u_{2n}$ satisfy 
	$
	1 - t u_i u_j = 0
	$ for all $i, j$ such that $i+j$  is odd. So, for odd $i+j$ we have $R_{u_i v_j}(0,1,0,1) = R_{u_i v_j}(1,0,1,0) = 0$ and $R_{u_i v_j}(k_1,k_1,k_2,k_2) = (-1)^{k_1 + k_2} t^{k_1}$.
	This observation together with some simple freezing/combinatorial arguments implies that there are $2^{n}$ possible configurations with non-zero weights. Moreover, they are uniquely determined by values on the right edge of the six vertex model. One can compute explicitly the weight of each configuration. For example, it can be done through the following recursion relation:
	\begin{equation*}
	\begin{split}
	&
	Z_{2n}(t,  1 / t^2, \dots, t,   1 / t^2) \\&
	= 
	(1 - t) (t^{2n-1} (1 - s_{0}^2 \gamma/t)(1 - \gamma) + (-t)^{2n-1}(1 - s_0 \gamma/ t^{2})(1 - s_0 \gamma t)) 
	\\&\hspace{160pt}\times \prod_{i<2n-1} (1 - u_i u_{2n}) (1 - u_i u_{2n-1})
	Z_{2n-2} (t,  1 / t^2, \dots, t,   1 / t^2).
	\end{split}
	\end{equation*}
	Here the first and the second summands correspond to the cases where we choose $R_{u_{2n} u_{1}}(1, 1, 1, 1)$ or $R_{u_{2n} u_{1}}(1, 1, 0, 0)$ cross vertex weights, respectively. 
	This recursion immediately gives us the formula:
	\begin{equation*}
	Z_{2n}(t,  1 / t^2, \dots, t,   1 / t^2)
	=  \gamma^{n} t^{n(n-2)} (1 - s_{0} t^{-2})^n (1 -s_{0} t)^{n}\prod_{i<j} (1 - u_i u_j)|_{(u_1, ... , u_{2n}) = (t,  1 / t^2, \dots, t,   1 / t^2)}.
	\end{equation*}
	Finally, property {\bf 5} comes from direct computations.
\end{proof}

\subsection{Explicit formula for the partition function}

\begin{theorem} \label{pf_thm}
	The partition function $\mathcal{P} (u_1, \dots, u_{2n}; t, \alpha)$ can be expressed explicitly as follows:
	\begin{equation}
	\label{pf_p}
	\mathcal{P} (u_1, \dots, u_{2n}; t, \alpha)
	= 	\prod_{j = 1}^{2n} \frac{1}{1 - s_0 u_j} \prod_{1 \leq i<j \leq 2n}
	\left(
	\frac{1-t u_i u_j}{u_i - u_j}
	\right)
	\pf_{1\leq i < j \leq 2n}
	\left[
	\frac{Z_{2}(u_i, u_j) (u_i - u_j)}
	{(1-u_i u_j) (1-t u_i u_j)}
	\right].
	\end{equation}
\end{theorem}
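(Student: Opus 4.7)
The strategy I would adopt is the Izergin--Korepin characterization scheme, adapted to the Pfaffian setting. Let $\tilde Z_{2n}$ denote the right-hand side of \eqref{pf_p} multiplied by $\prod_{i<j}(1-u_iu_j)\prod_i(1-s_0 u_i)$, so the task is to show $\tilde Z_{2n}=Z_{2n}$. The plan is to verify that $\tilde Z_{2n}$ satisfies all five properties \textbf{1}--\textbf{5} of Lemma~\ref{prop_lem}, and then conclude by a uniqueness argument based on divisibility.

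Expanding the Pfaffian over perfect matchings $M$ of $\{1,\dots,2n\}$ gives the convenient form
\[
\tilde Z_{2n} \;=\; \sum_{M}\mathrm{sgn}(M)\prod_{(i,j)\in M}Z_2(u_i,u_j)\,\prod_{\substack{i<j\\(i,j)\notin M}}\frac{(1-tu_iu_j)(1-u_iu_j)}{u_i-u_j}.
\]
Symmetry (property~\textbf{1}) is immediate, since both the Pfaffian and the Vandermonde-type factor $\prod(u_i-u_j)^{-1}$ change sign under any transposition. That symmetry together with the matching expansion forces cancellation of the simple $u_i=u_j$ poles, yielding polynomiality; a direct degree count (one factor of $u_{2n}$ from the matched $Z_2(u_k,u_{2n})$, and one each from the $2n-2$ unmatched factors involving $u_{2n}$) gives degree at most $2n-1$ in $u_{2n}$, proving property~\textbf{2}. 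The base case $n=1$ (property~\textbf{5}) is a direct inspection.

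The key step is property~\textbf{3}. At $u_{2n}=u_{2n-1}^{-1}$ the factor $(1-u_{2n-1}u_{2n})=0$ appears in the unmatched product for every matching $M$ that does \emph{not} pair $(2n-1,2n)$, killing those contributions. The surviving matchings decompose as $\{(2n-1,2n)\}\sqcup M'$ for a matching $M'$ on $\{1,\dots,2n-2\}$; the matched pair contributes $Z_2(u_{2n-1},u_{2n-1}^{-1})=(1-t)(1-\gamma s_0u_{2n-1})(1-\gamma s_0 u_{2n})$ since the second summand of $Z_2$ carries $(1-u_{2n-1}u_{2n})=0$; the unmatched factors attached to $(i,2n-1)$ and $(i,2n)$ for each $i\le 2n-2$ combine (after substitution $u_{2n}=u_{2n-1}^{-1}$) to the clean product $(1-tu_iu_{2n-1})(1-tu_iu_{2n})$; and the residual sum over $M'$ reassembles precisely into $\tilde Z_{2n-2}$, matching the claimed recursion. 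Property~\textbf{4} is then verified by a similar but more delicate analysis of the Pfaffian at the specialization $u_{2j-1}=t,\,u_{2j}=t^{-2}$, where entries become singular for odd--even index pairs while the prefactor $\prod(u_i-u_j)^{-1}$ diverges on same-parity pairs, so a careful $0\cdot\infty$ balance (or a resummation of matchings dictated by the degenerations) extracts the closed-form value.

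For uniqueness, proceeding by induction on $n$, the difference $D=Z_{2n}-\tilde Z_{2n}$ is a symmetric polynomial of degree $\le 2n-1$ in each variable that vanishes on every hypersurface $u_iu_j=1$ (by properties~\textbf{1} and~\textbf{3} together with the inductive hypothesis). Hence $D$ is divisible by each coprime factor $(u_iu_j-1)$, and so by the full product $\prod_{i<j}(u_iu_j-1)$; since this product has degree exactly $2n-1$ in each $u_i$, the quotient must be a constant. Property~\textbf{4} (or any single independent evaluation at a point where $\prod(u_iu_j-1)\ne 0$, for instance the leading asymptotics as $u_{2n}\to\infty$) then forces that constant to be zero. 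I expect the main obstacle to be property~\textbf{4} for $\tilde Z_{2n}$: the matching expansion at the specialization has individual terms that diverge and must cancel in carefully orchestrated pairs, so either a limiting argument in the Pfaffian or the substitution of a cleaner evaluation point is needed to close the uniqueness step without ambiguity.
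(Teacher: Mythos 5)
Your proposal is correct and follows essentially the same route as the paper: verify that the Pfaffian side (times the same prefactor) satisfies properties \textbf{1}--\textbf{5} of Lemma~\ref{prop_lem}, then conclude uniqueness by an interpolation/divisibility argument in $u_{2n}$ with the constant ambiguity $c\prod_{i<j}(1-u_iu_j)$ killed by the evaluation of property \textbf{4}. The only cosmetic differences are that you check the recursion via the perfect-matching expansion where the paper manipulates rows and columns of the Pfaffian, and you phrase the final step as divisibility by $\prod_{i<j}(u_iu_j-1)$ rather than Lagrange interpolation at the points $u_1^{-1},\dots,u_{2n-1}^{-1}$.
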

\begin{proof}
	First, one can deduce that the Pfaffian on the right-hand side of \eqref{pf_p} multiplied by the product $\prod_{j = 1}^{2n} (1 - s_0 u_j) \prod_{1 \leq i < j \leq 2n} (1 - u_{i} u_{j}) $ satisfy all the properties \textbf{1}-\textbf{5} from Lemma~\ref{prop_lem}. Namely, we have property \textbf{1} because both the Pfaffian and the Vandermonde $\prod_{1 \leq i < j \leq 2n} (u_{i} - u_{j}) $ change the sign under swaps $u_i \leftrightarrow u_{i+1}$. Properties \textbf{2} and \textbf{5} are straightforward from the very definition of the Pfaffian. To get property \textbf{3}, one can multiply the last row and column by $\prod_{1 \leq j < 2n} (1 - u_{j} u_{2n}) $ and the second-to-last row and column by $(1 - u_{2n-1} u_{2n})\prod_{1 \leq j < 2n-1} (1 - u_{j} u_{2n-1}) $. Note that all the elements in this matrix hook vanish except two with indices $n-1$ and $n$. In turn,
	\begin{equation*}
	Z_{2}(u_{2n-1},u_{2n})\big\vert_{u_{2n}=u_{2n-1}^{-1}}= (1-t)
	(1-\gamma s_0u_{2n})(1-\gamma s_0 u_{2n-1}).
	\end{equation*}
	Likewise, one can prove property \textbf{4}, using the recurrence and the following: \begin{equation*}Z_{2}(t,  1 / t^2) = \gamma(1 - t)(1 - s_{0} t^{-2})(1 -s_{0} t).\end{equation*}
	So, it remains to show that these properties determine a function uniquely. For this purpose one can use Lagrange interpolation the same way as in  \cite{Pet21} and \cite[Appendix B]{WZJ16}.
	
	Namely, we assume that two families of polynomials $f_{2n}(u_1, \dots, u_{2n})$ and $g_{2n}(u_1, \dots, u_{2n})$ satisfy properties \textbf{1}-\textbf{5} and prove that $f_{2n} = g_{2n}$ by induction on $n$. The base case  follows from Property \textbf{5}. To prove the induction step, assume that we proved this statement for $n-1$. Then, let us fix $2n-1$ arbitrary non-zero distinct points $u_1, \dots, u_{2n-1}$. Using the recurrence relation and symmetry, we obtain that $f_{2n}$ and $g_{2n}$ treated as polynomials in $u_{2n}$ coincide in $2n-1$  distinct points $u_{1}^{-1}, \dots, u_{2n-1}^{-1}$. Since their degree is $2n-1$, it follows that $f_{2n} - g_{2n} = c \cdot \prod_{i<j}(1 - u_i u_j)$ where $c$ does not depend on $u_{2n}$. However, because of symmetry it does not depend on $u_1, \dots, u_{2n-1}$ either, which means it is an absolute constant. Finally, as can be seen from property \textbf{4}, $f_{2n}$ and $g_{2n}$ have the same value at a fixed point, hence $f_{2n} = g_{2n}$.
	
	This concludes the proof of Theorem~\ref{pf_thm}.
\end{proof}

\begin{corollary}
	Under the specialization $u_j = t^{2n - j} / (\gamma s_{0})$, we have 
	\begin{equation} \label{pf_p_sp}
	\begin{split}
	& 
	\pf_{1\leq i < j \leq 2n}
	\left[
	\frac{ (t^{j} - t^{i})(\gamma^2 s_0^2(1 - t)(t^{i} - t^{2n})(t^{j}- t^{2n}) + (1 - \gamma)(t-s_{0}^{2}\gamma)(t^{i+j}\gamma^2 s_0^2-t^{4n})}
	{t^{2i+2j-2n}\gamma s_0(\gamma s_0-t^{4n}/(\gamma s_0)) (\gamma s_0 -t^{4n+1-i-j}/(\gamma s_0))}
	\right]
	\\
	&\hspace{-3pt}
	= \prod_{j = 0}^{2n-1} (1 - t^j\gamma^{-1}) \prod_{0 \leq i<j \leq 2n-1}
	\left(
	\frac{t^j - t^i}{\gamma s_0 -t^{i+j+1} / (\gamma s_0)}
	\right)
	\mathcal{P}(t^{2n - 1} / (\gamma s_{0}), t^{2n - 2} / (\gamma s_{0}), \dots, 1 / (\gamma s_{0}); t, \alpha)  \\
	&\hspace{65pt}=
	(-1)^{n} \gamma^{n} t^{n^{2}} \prod_{0 \leq i<j \leq 2n-1}
	\left(
	\frac{t^j - t^i}{\gamma s_0 -t^{i+j+1} / (\gamma s_0)}
	\right)
	\prod_{j=1}^{n}(1 - s_{0}^{2}\gamma t^{-2j+1})(1 - \gamma^{-1} t^{2j-2})
	.
	\end{split}
	\end{equation}
\end{corollary}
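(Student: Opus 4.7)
The plan is to deduce the corollary from Theorem~\ref{pf_thm} in two stages, one for each equality in \eqref{pf_p_sp}.

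First, I would substitute $u_j = t^{2n-j}/(\gamma s_{0})$ directly into the explicit formula \eqref{pf_p}. The required identities are $u_i - u_j = (t^{2n-i} - t^{2n-j})/(\gamma s_0)$, $1 - s_0 u_j = (\gamma - t^{2n-j})/\gamma$, $1 - u_i u_j = (\gamma^2 s_0^2 - t^{4n-i-j})/(\gamma s_0)^2$, and $1 - t u_i u_j = (\gamma^2 s_0^2 - t^{4n-i-j+1})/(\gamma s_0)^2$. Since $\gamma s_0 u_j = t^{2n-j}$, property \textbf{5} of Lemma~\ref{prop_lem} rewrites $Z_2(u_i,u_j)$ as $(1-t)(1-t^{2n-i})(1-t^{2n-j}) + (1-\gamma)(t-\gamma s_0^2)(1 - t^{4n-i-j}/(\gamma s_0)^2)$. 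Reindexing via $(i,j) \mapsto (2n-j, 2n-i)$ turns the product range $1 \leq i < j \leq 2n$ into $0 \leq i < j \leq 2n-1$, and carefully tracking the accumulated powers of $\gamma s_0$ (from the four factors above and from the prefactor $\prod_j (1-s_0 u_j)^{-1}$) yields the first equality. This step is mechanical bookkeeping.

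Second, I would explicitly evaluate $\mathcal{P}(t^{2n-1}/(\gamma s_0), \ldots, 1/(\gamma s_0); t, \alpha)$. Using \eqref{F_alpha}, the specialization $\gamma s_0 u_j = t^{2n-j}$ introduces the prefactor $\prod_{j=1}^{2n}(1 - t^{2n-j})$, which vanishes due to the $j=2n$ factor. A term with signature $\lambda$ is therefore nonzero only when $F_\lambda|_{s_0 \to \gamma s_0}$ develops a compensating pole at $u_{2n} = 1/(\gamma s_0)$ from a factor $1/(1 - (\gamma s_0) u_{\sigma(k)})$ in the symmetrization, which forces $m_0(\lambda) \geq 1$. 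Iterating this observation on $u_{2n-1}, u_{2n-2}, \ldots$ drastically restricts the contributing signatures; resumming the residues and simplifying by Pochhammer manipulations should produce the closed form $\frac{(-1)^n \gamma^n t^{n^2} \prod_{j=1}^n(1-s_0^2\gamma t^{-2j+1})}{\prod_{j=1}^n(1 - t^{2j-1}/\gamma)}$. An alternative route is to evaluate the partition function of \Cref{fig:5step} directly: at the geometric progression specialization many $R$-vertex weights degenerate and, together with the boundary vector $\ket{\rm \alpha_{-}}$, restrict the admissible configurations so that the partition function factorizes.

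The first step is routine but tedious; the main obstacle is the second step, namely the principal specialization evaluation of $\mathcal{P}$, since one must carefully balance the vanishing of the explicit prefactor against the poles arising from the symmetrization formula of $F_\lambda$. Once this value is in hand, substituting back into the first equality delivers the form stated in the corollary.
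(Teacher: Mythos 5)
Your overall architecture is right and matches the paper's: the first equality is indeed nothing more than substituting $u_j=t^{2n-j}/(\gamma s_0)$ into \eqref{pf_p} and reindexing, exactly as you describe, and your bookkeeping of the four elementary factors and of $Z_2$ is correct. The issue is with your \emph{primary} route for the second equality. The vanishing of the prefactor $\prod_{j=1}^{2n}(1-\gamma s_0 u_j)=\prod_{j=1}^{2n}(1-t^{2n-j})$ occurs only through the single factor $j=2n$; for $j<2n$ one has $1-t^{2n-j}\neq 0$, so there is nothing to ``iterate'' on $u_{2n-1},u_{2n-2},\dots$ by the same mechanism, and after extracting the residue at $u_{2n}=1/(\gamma s_0)$ you are still left with an infinite sum over signatures in the remaining $2n-1$ variables. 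The phrase ``resumming the residues and simplifying by Pochhammer manipulations should produce the closed form'' is where the actual work would have to happen, and as stated it is asserted rather than carried out; I do not see how to close it along these lines without essentially redoing a branching/Pieri analysis.

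The one-sentence alternative you mention in passing is in fact the paper's entire proof: under the principal specialization the right boundary of the lattice in \Cref{fig:5step} is forced, the whole configuration of cross vertices and $w^*$ vertices freezes, and $\mathcal{P}(t^{2n-1}/(\gamma s_0),\dots,1/(\gamma s_0);t,\alpha)$ is read off as a single product of weights (together with the contribution of the boundary vector $\ket{\rm \alpha_{-}}$), giving
\begin{equation*}
\mathcal{P}\bigl(t^{2n-1}/(\gamma s_0),\dots,1/(\gamma s_0);t,\alpha\bigr)
=(-1)^{n}\gamma^{n}t^{n^{2}}\,
\frac{\prod_{j=1}^{n}(1-s_0^{2}\gamma t^{-2j+1})}{\prod_{j=1}^{n}(1-t^{2j-1}\gamma^{-1})},
\end{equation*}
which is consistent with the closed form you conjecture (the odd-exponent factors of $\prod_{j=0}^{2n-1}(1-t^{j}\gamma^{-1})$ cancel against your denominator). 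I would promote that frozen-lattice argument from ``alternative'' to the main line of the proof and drop the residue analysis; with that change the proposal is complete and coincides with the paper's argument.
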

\begin{proof}
	Consider the lattice interpretation of $\mathcal{P}(t^{2n - 1} / (\gamma s_{0}), \dots, 1 / (\gamma s_{0}); t, \alpha)$. Indeed, under this specialization the right boundary is fixed, and therefore the whole configuration becomes frozen.
	By the way, it is not so easy to verify independently that the Pfaffian in the left-hand side of \eqref{pf_p_sp} factorizes.
\end{proof}

Using \eqref{part_alpha} and \eqref{F_alpha}, the left-hand side of the identity \eqref{pf_p} can be rewritten as the weighted sum of $F_{\lambda}$'s over all signatures $\lambda$ with even multiplicities in the following way:
\begin{equation*}
\begin{split}
&
\sum_{\lambda: m_{i}(\lambda) \in 2 \mathbb{Z}_{\geq 0}} \frac{1}{( t; t)_{m_{0}(\lambda)}} \prod_{j = 1}^{m_{0}(\lambda) / 2} (1 - s_{0}^2 \gamma t^{2j - 2})(1 - \gamma t^{2j-1}) \prod_{j=1}^{2n} \frac{1 - \gamma s_0 u_j}{1 - s_0 u_j}
\\
&\hspace{200pt}
\times \prod_{i = 1}^{\infty} \prod_{j = 1}^{m_{i}(\lambda) / 2} \frac{1 - s_{i}^2 t^{2j - 2}}{1 - t^{2j}} \left[F_{\lambda} (u_1, \dots, u_{2n})\Big\vert_{\text{$s_0\to \gamma s_0$}}\right].
\end{split}
\end{equation*}

After replacing $\gamma s_0$ by $s_0$, we obtain the desired statement of Theorem~\ref{lit_id}.

\section{Some specializations of the Littlewood identity} \label{sec: spec}

In this section we reduce our result
to classical Hall--Littlewood polynomials and we write a non-refined degeneration of our result.
\subsection{Reduction to the case of classical Hall--Littlewood polynomials}
As was shown in \cite{Pet21}, spin Hall--Littlewood rational functions $F_{\lambda}$ can be reduced to classical Hall--Littlewood polynomials $P_\lambda^{HL}$ in the following way:
\begin{equation}
F_{\lambda}(u_1, \dots, u_N)\Big\vert_{\text{$s_x=0$}} = \prod_{r\ge0}(t;t)_{m_r(\lambda)}\cdot
P_\lambda^{HL}(u_1,\ldots,u_N).
\end{equation}
So, after setting $ s_x = 0$ in equation~\eqref{littlewood_id}, we get
\begin{multline} \label{class_id}
\sum_{\lambda: m_{i}(\lambda) \in 2 \mathbb{Z}_{\geq 0}}  \prod_{j = 1}^{m_{0}(\lambda) / 2}(1 - \gamma t^{2j-1})\prod_{i = 1}^{\infty} \prod_{j = 1}^{m_{i}(\lambda) / 2} (1 - t^{2j-1}) P_{\lambda}^{HL} (u_1,\ldots,u_{2n})
=\\
=
\prod_{1 \leq i<j \leq 2n}
\left(
\frac{1-t u_i u_j}{u_i - u_j}
\right) 
\pf_{1\leq i < j \leq 2n}
\left[
\frac{ (u_i - u_j)((1 - \gamma t+ (\gamma-1)tu_i u_j)}
{(1-u_i u_j) (1-t u_i u_j)}
\right],
\end{multline}
which coincides with the Littlewood identity proved in \cite[Theorem 5]{WZJ16}.

\subsection{Reduction to the unrefined case}
To get unrefined identity, we set $\gamma = 1$ and obtain the following formula:
\begin{multline} \label{unref_id}
\sum_{\lambda: m_{i}(\lambda) \in 2 \mathbb{Z}_{\geq 0}}   \prod_{i = 0}^{\infty} \prod_{j = 1}^{m_{i}(\lambda) / 2} \frac{1 - s_{i}^2 t^{2j - 2}}{1 - t^{2j}} F_{\lambda} (u_1, \dots, u_{2n})
=\\
=
\prod_{1 \leq i<j \leq 2n}
\left(
\frac{1-t u_i u_j}{u_i - u_j}
\right) 
\pf_{1\leq i < j \leq 2n}
\left[
\frac{ (u_i - u_j)(1 - t) }
{(1-u_i u_j) (1-t u_i u_j)}
\right].
\end{multline}

The right-hand side of \eqref{unref_id} coincides with the right-hand side of \eqref{class_id} at $\gamma = 1$, but the expansions are different.

\bigskip

\nocite{*}
\bibliographystyle{alpha}
\bibliography{fin_2_ref_littlewood}
\medskip

\textsc{S. Gavrilova, Faculty of Mathematics, National Research University Higher School of Economics, Usacheva 6, 119048 Moscow, Russia}

E-mail: \texttt{sveta\_6117@mail.ru}

\end{document}